\newtheorem{theorem}{Theorem}[section]
\newtheorem{corollary}[theorem]{Corollary}
\newtheorem{lemma}[theorem]{Lemma}
\newtheorem{propo}{Proposition}[section]
\newtheorem{conjecture}[theorem]{Conjecture}
\newtheorem{question}[theorem]{Question}
\newtheorem{notat}{Notation}[section]
\newtheorem{fact}{Fact}[section]
\newtheorem{note}{Note}
\theoremstyle{definition}
\newtheorem{definition}[theorem]{Definition}
\newcommand{\restrict}{\mathord{\upharpoonright}}
\newcommand{\concat}{\mathbin{\raisebox{1ex}{\scalebox{.7}{$\frown$}}}}
\begin{document}

\title{Ken's colorful questions}
\author{Iv\'an Ongay-Valverde\\\emph{Host Institution: Department of Mathematics and Statistics}\\\emph{York University, Toronto, ON, CA}\\\emph{Current Institution: Department of Mathematics}\\\emph{University of Toronto, Toronto, ON, CA}\\\emph{Email: ivan.ongay.valverde@gmail.com}}
\date{}
\maketitle
\begin{flushright}
\textit{Dedicated to Ken, Anne and their family.
}

\end{flushright}

\begin{abstract} The paper surveys some questions concerning \emph{coloring axioms} which grew out of the discussions the author had with his PhD advisor Ken Kunen.

\end{abstract}

\section{Introduction to coloring axioms}

In studying a question of his long time colleague, M. E. Rudin,
who asked whether \textbf{MA} and the failure of \textbf{CH} implies that every locally connected, hereditarily Lindel\"{o}f, compact space is metrizable Ken Kunen became interested in an example of Filippov \cite{MR0256350}.
Filippov had used a Luzin set to construct a locally connected, hereditarily Lindel\"{o}f, compact space that is not metrizable, and Filippov's
space is also hereditarily separable. Since \textbf{MA} + ¬\textbf{CH} implies that there are no Luzin sets, Kunen wondered whether
\textbf{MA} and the failure of \textbf{CH} might refute the existence of such a space. In \cite{KunenLocallyCompacta} he
discovered some interesting facets of \textbf{SOCA} and used these to show
that this is not the case.

Filippov's construction relies on the geometry of spheres and Ken was able to see in this a useful weakening of a Luzin set.
For $T\subseteq \mathbb R^n$ let $$T^* = \{x- y : x, y \in T \And x\neq y\}$$
and Kunen defines $T\subseteq \mathbb R^n$ to be {\em skinny}
if the closure of $\{x/\|x\| : x\in T^*\}$ is not the entire surface of the sphere in $\mathbb R^n$.
He calls a set $ E \subseteq \mathbb R^n$ {\em weakly Luzin} if $E$ is uncountable but every skinny subset of $E$
is countable. He then shows that the Filippov space constructed from a set $E$ has no uncountable discrete subsets if and only if $E$ is weakly Luzin. Furthermore, he shows that weakly Luzin sets and the entangled sets that play a prominent role in the study of colouring axioms have a nice common combinatorial generalization.

With these ideas he is able to show from \textbf{SOCA} that if $X$ is compact and $Y$ is compact metric
with $\pi: X \to Y$ continuous and if there is some uncountable $E \subseteq Y$
such that for all $y \in E$, there are three points $\{x_{i,y}\}_{i\in 3} \subseteq \pi^{-1}\{y\}$
with disjoint open neighbourhoods $U_{i,y}$ of $x_{i,y}$ with pairwise disjoint ranges under $\pi$
then X has an uncountable discrete subset. These results stimulated my interest in modifications of colouring axioms.

Following Kunen's \cite{ KunenLocallyCompacta, kunen2014set, hart2011arcs}, given a topological space $E$ we denote by $E^{\dagger}=E^{2}\setminus \{(x,x):x\in E\} $ the square of the space without the diagonal. A set $W\subseteq E^{\dagger}$ is \emph{symmetric} if $(x,y)\in W$ whenever $(y,x)\in W$. We refer to a symmetric $W\subseteq E^{\dagger}$ as a coloring. We shall consider various topological properties of the coloring, for example, if $W$ is open we say that it is an open coloring.
If there is $T\subseteq E$ such that $T^{\dagger}\subseteq W$ then we say that it is \emph{$W$-connected, $W$-homogeneous}, or \emph{open-homogeneous} (when $W$ is not clopen). If there is $T\subseteq W$ such that $T^{\dagger}\cap W=\emptyset$ then we say that it is \emph{$W$-free}, \emph{$W^{c}$-homogeneous} or \emph{closed-homogeneous} (when not clopen).

The fundamental concept dealt with in the paper is the \emph{Semi Open Coloring Axiom}:

\begin{definition}
Given a collection $\mathcal{X}$ of separable metric (or topological) spaces \textbf{SOCA($\mathcal{X}$)} is the statement ``Given an uncountable space $E\in \mathcal{X}$ and an open coloring $W\subseteq E^{\dagger}$, there is an uncountable $T\subseteq E$ which is either $W$-connected or $W$-free (in other words, $T$ is homogeneous)". \textbf{SOCA} is \textbf{SOCA($\mathcal{X}$)} when $\mathcal{X}$ is the class of all separable metric spaces.
\end{definition}

\textbf{SOCA} was arguably Ken's favourite colouring axiom;
In his book \cite{kunen2014set}, he worked out a complete proof of the consistency of \textbf{SOCA} and as mentioned above he put it to good use in \cite{hart2011arcs} and \cite{KunenLocallyCompacta}.

The axiom itself was introduced by Abraham, Rubin and Shelah in \cite{abraham1985consistency} together with another principle called there the \emph{Open Coloring Axiom} (\textbf{OCA}), which later is usually referred to as \textbf{OCA}${}_{[ARS]}$ to distinguish it from the ``other" open coloring axiom introduced by Todor\v{c}evi\'{c} \cite{todorcevic1989partition} sometimes denoted by \textbf{OCA}${}_{[T]}$ and lately called the \emph{Open Graph Axiom} (\textbf{OGA}) by Todor\v{c}evi\'{c} himself.

Both axioms are consequences of the \emph{Proper Forcing Axiom} (\textbf{PFA}). Given that the consistency of \textbf{PFA} requires large cardinals the following two somewhat vague questions seem natural:

\begin{question}\label{PFA colors}
How much of the strength of \textbf{PFA} can be expressed with coloring axioms?
\end{question}

\begin{question}
How how much of \textbf{PFA} is equiconsistent with \textbf{ZFC}? Can that be expressed with a coloring axiom?
\end{question}

The first question has been studied extensively by Todor\v{c}evi\'{c} and his school. In particular, he showed \cite{todorcevic1989partition} that \emph{Martin's Axiom} (\textbf{MA}) is equivalent to the statement that given a separable metric space $E$ of size less than $\mathfrak c$ every ccc\footnote{A coloring $W$ \emph{is ccc} if given $p_{\alpha}\in[E]^{<\omega}$, $\alpha<\omega_1$, such that $(p_{\alpha})^{\dagger}\subseteq W$ for all $\alpha<\omega_1$, there exist $\alpha, \beta<\omega_1$ such that $(p_{\alpha}\cup_{\beta})^{\dagger}\subseteq W$.}
coloring of $E^{\dagger}$ has an uncountable homogeneous set (see also \cite{todorcevic-velickovic}). Furthermore, Moore \cite{MooreOCAaleph2} showed that \textbf{OCA$_{[ARS]}$} and \textbf{OGA} together imply that $\mathfrak{c}=\aleph_2$.

In \cite{abraham1985consistency}, the relative consistency of \textbf{ZFC}+\textbf{OCA$_{[ARS]}$} + \textbf{SOCA} + ``\emph{There is an increasing set\footnote{We say that $A\subseteq \mathds{R}$ is an \emph{increasing set} if and only it is uncountable and for any one-to-one function $f\subseteq A^2$ between two disjoint uncountable subset of $A$ and any $n\in\omega$, there are $a_0<a_1<...<a_{n-1}$ such that $f(a_{i})< f(a_{i+1})$ for all $i< n-1$. }}" is established, while an increasing set is a counterexample to \textbf{OGA} as shown in \cite{todorcevic1989partition}. Hence, \textbf{SOCA} is weaker than \textbf{OGA}, and \textbf{OCA$_{[ARS]}$} does not imply \textbf{OGA}. Abraham-Rudin-Shelah also showed (in \cite{abraham1985consistency}) that \textbf{SOCA} does not imply \textbf{OCA$_{[ARS]}$}. So the only possible implications left are:

\begin{question}\label{OCA go SOCA}
Does \textbf{OCA$_{[ARS]}$} imply \textbf{SOCA}?
\end{question}

\begin{question}\label{OGA go OCA}
Does \textbf{OGA} imply \textbf{OCA$_{[ARS]}$}?
\end{question}

The most famous open problem about the coloring axioms is, of course:

\begin{question}[\cite{todorcevic1989partition}] Is \textbf{OGA} consistent with $\mathfrak c>\aleph_2$?
\end{question}

A positive answer to this question provides a negative answer to the previous one by the aforementioned result of Moore \cite{MooreOCAaleph2}. Moreover, the analogous question for
\textbf{OCA$_{[ARS]}$} has recently been solved in the affirmative by Gilton and  Neeman \cite{gilton2019abraham}.

We conclude this quick overview of coloring axioms by recalling another open problem raised by Todor\v{c}evi\'c in \cite{todorcevic1989partition} (see also \cite{BobanApplicationsOCA}):

\begin{question}[\cite{todorcevic1989partition}] Is \textbf{OGA}($\mathcal X$) consistent for the class $\mathcal X$ of all regular spaces without uncountable discrete subsets?
\end{question}

An analogous question can be also asked for \textbf{SOCA}:
\begin{question}
What is the largest family of topological spaces $\mathcal{X}$ such that \textbf{SOCA}($\mathcal{X}$) is relatively consistent with \textbf{ZFC}?
\end{question}

\section{Weakenings of SOCA}\label{Section SOCA}

In this section we consider two natural weaker versions of \textbf{SOCA}:

\begin{itemize}
\item $\textbf{Clopen SOCA}$ - the same as \textbf{SOCA} but only for clopen colorings.
\item $\textbf{Dense SOCA}$ - the same as \textbf{SOCA} but only for open dense colorings.
\end{itemize}

To understand the relation between these principles it is useful to consider the following concept:

\begin{definition}
Given a topological property $P$, an uncountable separable metric space $E$ and a coloring $W$ over $E$ we say that \emph{$W$ can be reduced to a $P$-coloring} if there is an uncountable set $T\subseteq E$ such that $W\cap T^{\dagger}$ is a coloring with property $P$ in the topology of $T^{\dagger}$ induced from $E^{\dagger}$.
\end{definition}

Obviously, given any collection $\mathcal{X}$ of separable metric spaces, the axiom \textbf{SOCA($\mathcal{X}$)} is stronger than both \textbf{Clopen SOCA($\mathcal{X}$)}, \textbf{Dense SOCA($\mathcal{X}$)}.

\begin{propo}\label{sons of soca} Assume \textbf{SOCA}. Then
\begin{enumerate}
\item (\textbf{Clopen Reduction}) All open colorings on a separable metric space can be reduced to clopen colorings.
\item (\textbf{Dense Reduction}) Every open coloring on a separable metric space can be reduced to an open dense or empty coloring.
\end{enumerate}
\end{propo}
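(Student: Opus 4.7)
The plan is to apply \textbf{SOCA} directly to the pair $(E,W)$ and read off both reductions from the resulting dichotomy. Given an uncountable separable metric space $E$ and an open coloring $W \subseteq E^{\dagger}$, \textbf{SOCA} furnishes an uncountable $T \subseteq E$ that is either $W$-connected or $W$-free; accordingly, the restriction $W \cap T^{\dagger}$ equals either the whole of $T^{\dagger}$ or $\emptyset$. Both parts of the proposition will follow from this single application.

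For part (1), I would observe that $\emptyset$ and the ambient space $T^{\dagger}$ are trivially clopen in $T^{\dagger}$ under the subspace topology it inherits from $E^{\dagger}$; hence in either alternative of the dichotomy, $W \cap T^{\dagger}$ is a clopen coloring of $T^{\dagger}$, which is exactly the content of the clopen reduction. For part (2), the $W$-free case produces the empty coloring, while the $W$-connected case yields $W \cap T^{\dagger} = T^{\dagger}$, which is trivially open and dense in itself (any space is both open and dense as a subset of itself). Thus the restricted coloring is open dense or empty, yielding the dense reduction.

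The proof is essentially a one-line application of \textbf{SOCA} followed by unpacking the two alternatives, so no serious obstacle arises: the content of the proposition is simply that the homogeneous set which \textbf{SOCA} already produces is trivially a reduction of the desired form. The only sanity check worth noting is that the subspace topology on $T^{\dagger}$ inherited from $E^{\dagger}$ agrees with the product-minus-diagonal topology one would naturally put on $T^{\dagger}$, which is immediate since $T \subseteq E$ is a subspace of a metric space. I expect the real value of this proposition to lie in its role as scaffolding for later comparisons between \textbf{SOCA}, \textbf{Clopen SOCA}, and \textbf{Dense SOCA}, where these reductions will presumably be combined with the weakened axioms to recover the full strength of \textbf{SOCA}.
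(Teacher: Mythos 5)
Your proof is correct and is essentially identical to the paper's own argument: a single application of \textbf{SOCA} to $(E,W)$, followed by the observation that $T^{\dagger}\cap W$ is either all of $T^{\dagger}$ (clopen and open dense) or empty (clopen and empty). Nothing further is needed.
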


\begin{proof} We shall prove both items simultaneously. Given a space $E$ and an open coloring $W$ over it, using \textbf{SOCA} there is an uncountable $T\subseteq E$ that is homogeneous. If $T$ is $W$-connected, then $T^{\dagger}\cap W=T^{\dagger}$ which is clopen and open dense in $T^{\dagger}$. Otherwise, $T$ is $W$-free so $T^{\dagger}\cap W=\emptyset$ which is clopen and empty.
\end{proof}

Recall that an uncountable set $A\subseteq \mathds{R}$ is \emph{$2$-entangled} if for every uncountable collection
of pairwise disjoint $2$-element subsets of $A$, there are pairs $(x_{1}, y_{1})$, $(x_{2}, y_{2})$, $(w_{1}, z_{1})$, $(w_{2}, z_{2})$ in the collection such that $x_{1}<x_{2}$ and $y_{1}<y_{2}$, $w_{1}<w_{2}$ but $z_{1}>z_{2}$. The existence of a $2$-entangled set follows e.g. from \textbf{CH}, while in \cite{abraham1985consistency} it is shown that
\textbf{SOCA} implies that there are no $2$-entangled sets.
Interestingly, \textbf{Clopen SOCA} suffices and, consequently, \textbf{Clopen SOCA} is not a theorem of \textbf{ZFC}.

\begin{theorem} Assuming \textbf{Clopen SOCA} or \textbf{Dense Reduction} there are no $2$-entangled sets.
\end{theorem}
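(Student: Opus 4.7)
My plan is to encode a putative $2$-entangled set $A$ as a clopen symmetric coloring on a well-chosen uncountable separable metric subspace $E \subseteq \mathbb{R}^{2}$, and then derive a contradiction from either Clopen SOCA or Dense Reduction.

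First I would split $A$ at a cut point. Since $A$ is uncountable, there exists $q \in \mathbb{R}$ with both $A_{0} := A \cap (-\infty, q)$ and $A_{1} := A \cap (q, \infty)$ uncountable (else $A$ itself would be countable, by a routine bisection on $\mathbb{R}$). Enumerate $A_{0} = \{a^{0}_{\alpha} : \alpha < \omega_{1}\}$ and $A_{1} = \{a^{1}_{\alpha} : \alpha < \omega_{1}\}$ without repetition, set $p_{\alpha} = (a^{0}_{\alpha}, a^{1}_{\alpha})$, and let $E = \{p_{\alpha} : \alpha < \omega_{1}\} \subseteq \mathbb{R}^{2}$. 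The $2$-element sets $P_{\alpha} := \{a^{0}_{\alpha}, a^{1}_{\alpha}\}$ are then pairwise disjoint subsets of $A$, because $A_{0} \cap A_{1} = \emptyset$ and the two enumerations are one-to-one.

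Next I would define the symmetric coloring $W \subseteq E^{\dagger}$ by $(p_{\alpha}, p_{\beta}) \in W$ iff $(a^{0}_{\alpha} - a^{0}_{\beta})(a^{1}_{\alpha} - a^{1}_{\beta}) > 0$, i.e., when the two pairs ``move in the same direction''. For distinct $\alpha \neq \beta$ both coordinate differences are nonzero, so $W$ and $E^{\dagger} \setminus W$ are each defined by a single strict inequality; hence $W$ is \emph{clopen} in $E^{\dagger}$. The entire purpose of the splitting step is to rule out the intermediate case $(a^{0}_{\alpha} - a^{0}_{\beta})(a^{1}_{\alpha} - a^{1}_{\beta}) = 0$ and thereby make the natural coloring clopen.

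To conclude, I would apply Clopen SOCA to $(E, W)$ to obtain an uncountable $W$-homogeneous $T \subseteq E$; for the Dense Reduction alternative, since $W$ is clopen in $E^{\dagger}$, any ``open dense'' reduction on some $T^{\dagger}$ is automatically clopen and dense in $T^{\dagger}$, hence equal to $T^{\dagger}$, while the empty alternative is immediate. In the $W$-connected case every two pairs in $T$ go in the same direction, so the uncountable pairwise disjoint family $\{P_{\alpha} : p_{\alpha} \in T\}$ contains no \emph{crossing} pair, contradicting the ``$w_{1} < w_{2}$ but $z_{1} > z_{2}$'' clause of $2$-entangledness; the $W$-free case symmetrically destroys the ``$x_{1} < x_{2}$ and $y_{1} < y_{2}$'' clause. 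The main obstacle is arranging clopenness of $W$, which the splitting into $A_{0}, A_{1}$ handles in one stroke; after that, the axioms do the combinatorial work for free.
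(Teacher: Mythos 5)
Your proof is correct and follows essentially the same route as the paper: both arguments hinge on arranging an uncountable set of pairs from $A$ on which the ``increasing versus crossing'' coloring becomes clopen (because both coordinate differences are always nonzero), then invoking \textbf{Clopen SOCA} directly, or observing that a clopen set which is dense in $T^{\dagger}$ must equal $T^{\dagger}$ to handle \textbf{Dense Reduction}. Your only departure is cosmetic --- you obtain the pairs by splitting $A$ at a cut point and zipping the two halves, whereas the paper picks points of $A^{2}$ recursively avoiding previously used rows and columns; your variant has the mild bonus of making the $2$-element sets visibly pairwise disjoint.
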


\begin{proof} It suffices to show that given any uncountable linearly ordered separable metric space $X$, the \emph{increasing} open coloring
$$W=\{((a,b), (c,d)) : \ a<c \leftrightarrow b<d \}$$
of $X^{2}$ can be reduced to a clopen coloring.

To see this, recursively construct $T=\{x_{\xi}=(x_{1}^{\xi}, x^{\xi}_{2}): \xi<\omega_1\}$ as follows: Assume $x_{\xi}=(x_{1}^{\xi}, x^{\xi}_{2})$ for $\xi<\alpha$ have already been chosen. Since $\alpha$ is countable $X^{2}\setminus\left(\bigcup_{\xi<\alpha}(X\times\{x_{2}^{\xi}\}\cup \{x_{1}^{\xi}\}\times X)\right)\neq \emptyset$. Take $x_{\alpha}$ to be any point in it.

Now, given two distinct $(x,y), (z,w)\in T$, we have that $x\neq y$ and $z\neq w$, hence, $((x,y), (z,w))$ is either in $W$ or in $\{((a,b), (c,d)): a<c \leftrightarrow b>d \}$, but both are open sets. Therefore, $T\cap W$ is clopen in $T$.

\medskip

We start the proof from the \textbf{Dense Reduction} by noting the following:

\begin{fact} \label{clopen to open dense}
The only way to reduce a clopen coloring to an open dense one is to have an uncountable homogeneous open set.
\end{fact}

To see this, let $W$ be a clopen coloring over $E$ and assume that $T\subseteq E$ is an uncountable set that reduces $W$ to open dense. Notice that, given $(a,b)\in T^{\dagger}\cap W^{c}$, as $T$ reduces $W$ to open dense, $T^{\dagger}\cap W \cap U\neq \emptyset$ for every open set $(a,b)\in U$. On the other hand, as $W$ is clopen in $E$, there is an open set $V$ such that $(a,b)\in V\subseteq W^{c}$. Clearly, these two assumptions are contradicting each other. Therefore, $T^{\dagger}\cap W^{c}=\emptyset$ making $T$ open-homogeneous. This concludes the proof of the fact.

Let $W$ be the increasing coloring on $A^{2}$ where $A$ is a $2$-entangled set. We know that $W$ can be reduced to a clopen coloring, so lets assume that it is clopen. Notice that for any $T\subseteq A^{2}$, $T^{\dagger}\cap W$ is clopen. Using our fact, the only way to get an open dense coloring or an empty coloring would be finding an uncountable homogeneous set, which does not exist when $A$ is a $2$-entangled set. Then the existance of a $2$-entangled set implies the failure of \textbf{Dense Reduction}.
\end{proof}

The increasing coloring can be used to show that there is an open coloring which is not reducible to an open dense coloring, hence the phrase ``or empty" in the \textbf{Dense Reduction} is necessary. To see this take a subset of $\mathds{R}^{2}$ consisting of the graph of a decreasing function and the graph of a countable partial increasing function, and consider the increasing coloring
defined above.

Next we shall see that the axioms considered here imply that $\mathfrak b>\omega_1$ following Todor\v{c}evi\'c \cite{todorcevic1989partition} and Moore \cite{MooreContColor}.
We shall call an $\leq^*$-increasing unbounded chain in $\omega^{\omega}$ of minimal length a \emph{$\mathfrak{b}$-scale}.

\begin{propo}\label{Reduce clopen and scales} Each of the axioms
\textbf{Clopen SOCA}, \textbf{Dense SOCA} and \textbf{Clopen Reduction} implies $\mathfrak{b}>\aleph_1$.
\end{propo}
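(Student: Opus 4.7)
The plan is to argue by contradiction: suppose $\mathfrak{b}=\aleph_1$ and fix a $\mathfrak{b}$-scale $\{f_\alpha : \alpha<\omega_1\}\subseteq \omega^\omega$. View $E=\{f_\alpha : \alpha<\omega_1\}$ as a subspace of Baire space, which is an uncountable separable metric space. The driving object is the pointwise-incomparability coloring
\[
W=\{(f,g)\in E^\dagger : (\exists n)\,f(n)<g(n)\ \text{and}\ (\exists m)\,f(m)>g(m)\},
\]
which is symmetric and open because the existence of finite witnesses for a strict inequality is preserved under small perturbations in the product topology.

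The first common step is the combinatorial lemma that no uncountable $T\subseteq E$ is $W$-free. Indeed, such a $T$ is a pointwise $\leq$-chain and contains an $\omega_1$-indexed pointwise-monotone subsequence $(h_\beta)_{\beta<\omega_1}$ of distinct functions; for each $n$ the sequence $(h_\beta(n))_{\beta<\omega_1}$ is monotone in $\omega$, hence eventually constant at some countable stage $\beta_n$, so $\beta^{*}=\sup_n\beta_n<\omega_1$ witnesses that all $h_\beta$ with $\beta>\beta^{*}$ agree coordinatewise, contradicting distinctness.

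The three axioms now diverge. For \textbf{Dense SOCA}, I would verify that $W$ is dense in $E^\dagger$: in any basic neighborhood of $(f_\alpha,f_\beta)$ one produces nearby pairs exhibiting two opposite crossings using the $\leq^{*}$-unboundedness of the scale. Dense SOCA then delivers an uncountable homogeneous $T$ which by the lemma is $W$-connected, i.e., a pointwise antichain. But for $\alpha<\beta$ in the scale $f_\alpha\leq^{*}f_\beta$ confines every witness of $f_\alpha(n)>f_\beta(n)$ to a finite initial segment; a Fodor/Ramsey-type stabilization on an uncountable $Y\subseteq T$ makes this bound uniformly equal to some $N^{*}$, and pigeonhole on the countable set $\omega^{N^{*}}$ extracts an uncountable sub-subsequence with common restriction to $N^{*}$, contradicting the antichain property. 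For \textbf{Clopen SOCA}, I would replace $W$ by a clopen coarsening obtained by fixing a finite witness level, e.g.\ via the parity of the first-difference index $\Delta(f,g)=\min\{n : f(n)\neq g(n)\}$ twisted with the comparison bit between $f(\Delta(f,g))$ and $g(\Delta(f,g))$, and rerun the stabilization argument to rule out uncountable homogeneous sets. For \textbf{Clopen Reduction}, applying the axiom to $W$ yields an uncountable $T$ on which $W\cap T^\dagger$ is clopen; openness of the complement in $T^\dagger$ then confines every pointwise-comparable pair to a basic product neighborhood of comparable pairs in $T$, and covering $T^\dagger$ by countably many such neighborhoods (via hereditary Lindel\"ofness of $E$) extracts an uncountable pointwise chain, contradicting the lemma.

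The main obstacle is the $W$-connected case: showing that a $\mathfrak{b}$-scale admits no uncountable pointwise antichain. This hinges on coupling the $\leq^{*}$-increasing property—which localizes antichain witnesses to a finite prefix—with a Fodor/Ramsey-type stabilization that makes the prefix bound uniform over uncountably many indices. Secondary technicalities are the density verification for the Dense SOCA case and the correct design of the clopen coarsening for the Clopen SOCA case, but both reduce, once the antichain argument is in place, to finite pigeonhole on initial segments.
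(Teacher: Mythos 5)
Your setup coincides with the paper's: the coloring $W$ of pointwise incomparability on a $\mathfrak{b}$-scale is exactly the one used there. But the decisive step in your outline --- that a $\mathfrak{b}$-scale admits no uncountable pointwise $\leq$-antichain --- is not proved by your ``Fodor/Ramsey-type stabilization,'' and this is a genuine gap. The witness bound $N_{\alpha\beta}=\min\{N:\forall n\geq N\ f_\alpha(n)\leq f_\beta(n)\}$ is a function of the \emph{pair} $\{\alpha,\beta\}$, so Fodor's lemma does not apply (nothing is regressive in one variable), and making it uniformly equal to some $N^{*}$ on an uncountable set is an instance of $\omega_1\to(\omega_1)^2_{\aleph_0}$, which is false in \textbf{ZFC} (indeed $\omega_1\not\to(\omega_1)^2_2$). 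For a fixed $\beta$ the countably many values $N_{\alpha\beta}$, $\alpha<\beta$, need not even be bounded. This is precisely why the paper does not argue elementarily here: it invokes Todor\v{c}evi\'c's oscillation theory to get, in every uncountable (hence cofinal) subset of a $\mathfrak{b}$-scale, both a pointwise comparable pair and a pointwise incomparable pair. Without that input your \textbf{Dense SOCA} and \textbf{Clopen Reduction} cases do not close.

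Two secondary problems. First, in the $W$-free case you extract an ``$\omega_1$-indexed pointwise-monotone subsequence'' from an uncountable pointwise chain; an uncountable linear order need not contain a monotone $\omega_1$-sequence (it embeds into $\mathbb{R}$ via $f\mapsto\sum_n 2^{-n}f(n)/(1+f(n))$), so you must use the $\leq^{*}$-increasing structure of the scale to first discard the countable $=^{*}$-classes and obtain a genuinely well-ordered pointwise-increasing $\omega_1$-sequence before the eventual-constancy argument applies. Second, for \textbf{Clopen SOCA} your proposed clopen coarsening via the parity of $\Delta(f,g)$ is not shown to lack uncountable homogeneous sets, which is the whole content of that case; the paper simply cites Moore's theorem that \textbf{Clopen SOCA} implies $\mathfrak{b}>\aleph_1$, whose proof again runs through oscillations rather than a first-difference coloring.
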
\label{b scales not clopen}

\begin{proof} The fact that \textbf{Clopen SOCA} implies $\mathfrak{b}>\aleph_1$ was proved in \cite{MooreContColor}.

\smallskip

Now, assume, towards a contradiction, that every open coloring can be reduced to a clopen coloring and $\mathfrak{b}=\aleph_1$.
Take a $\mathfrak{b}$-scale and let $W$ be an open coloring such that \[W^{c}=\{(f,g): f\geq g \mbox{ or } f\leq g\}\]
where $f\leq g$ means that for all $n\in \omega$, $f(n)\leq g(n)$.
The above coloring has no uncountable homogeneous set when $\mathfrak{b}=\aleph_{1}$ (see \cite{todorcevic1989partition}). Furthermore, using the theory of oscillation of Todor\v{c}evi\'c \cite{todorcevic1989partition}, in any cofinal (for $\mathfrak{b}=\aleph_{1}$, uncountable) set of a $\mathfrak{b}$-scale there are $f$ and $h$, such that $h\leq^{\ast} f$ but $h\not\leq f$, so there is a value $m$ where $h(m)>f(m)$, so $W^{c}$ is closed but not clopen.
Now, since every uncountable subset of a $\mathfrak{b}$-scale is also a $\mathfrak{b}$-scale (when $\mathfrak{b}=\aleph_{1}$), $W$ can never be reduced to a clopen coloring. This contradicts our assumptions.

\smallskip
Finally, towards a contradiction, assume \textbf{Dense SOCA} and $\mathfrak{b}=\aleph_1$. Using a $\mathfrak{b}$-scale $B$ of size $\aleph_1$, we can eliminate countably many points to have that every open set is uncountable. Using the same coloring $W$ as above, every open set in $B^{\dagger}$ has a subset of the form
\[\{(f,h): \exists n,m\in \omega f(n)< h(n) \mbox{ and } f(m)>h(m)\},\]
which is an open set. So $W^{c}$ is nowhere dense (nwd). Furthermore, by a theorem of Todor\v{c}evi\'c \cite{todorcevic1989partition}, every $\mathfrak{b}$-scale has two functions, $f$ and $g$ such that $f<g$. Then, this coloring is open dense but it has no uncountable homogeneous set.
\end{proof}

In particular, none of the four axioms considered are theorems of \textbf{ZFC}.
It turns out that various combinations of these weakenings of \textbf{SOCA} recover the whole strength of \textbf{SOCA}.

\begin{theorem}
The following are equivalent:
\begin{enumerate}
\item \textbf{SOCA}
\item \textbf{Clopen SOCA} + \textbf{Clopen Reduction}
\item \textbf{Dense SOCA} + \textbf{Dense Reduction}
\item \textbf{Clopen Reduction} + \textbf{Dense Reduction}
\item \textbf{Dense SOCA} + \textbf{Clopen SOCA}
\end{enumerate}
\end{theorem}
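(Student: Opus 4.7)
The plan is to derive each of (2)--(5) from (1) and then establish each converse independently. The forward direction from (1) needs only Proposition \ref{sons of soca} for the two reduction principles, together with the observation that \textbf{Clopen SOCA} and \textbf{Dense SOCA} are immediate restrictions of \textbf{SOCA} to fewer colorings.

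For the converses, fix a separable metric space $E$ and an open coloring $W\subseteq E^\dagger$. For (2)$\Rightarrow$(1), \textbf{Clopen Reduction} yields an uncountable $T\subseteq E$ with $W\cap T^\dagger$ clopen in $T^\dagger$, and \textbf{Clopen SOCA} then extracts a homogeneous subset. For (3)$\Rightarrow$(1) the same template works with \textbf{Dense Reduction} followed by \textbf{Dense SOCA}, handling the empty-reduction case by noting that $T$ is already $W$-free. For (4)$\Rightarrow$(1) I chain the two reductions: \textbf{Clopen Reduction} first gives $T_1$ with $W\cap T_1^\dagger$ clopen, and then \textbf{Dense Reduction} on $T_1$ produces $T_2\subseteq T_1$ with $W\cap T_2^\dagger$ open dense or empty in $T_2^\dagger$; since $W\cap T_2^\dagger$ remains clopen in $T_2^\dagger$, Fact \ref{clopen to open dense} forces $T_2$ to be open-homogeneous in the open dense case and $W$-free in the empty case.

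The real content is (5)$\Rightarrow$(1), where neither axiom alone controls a general open coloring. My plan is to manufacture an open dense symmetric coloring that envelopes $W$, namely
\[ U \;=\; W \,\cup\, \operatorname{int}_{E^\dagger}(W^c). \]
This $U$ is open (a union of two opens), symmetric (each summand is invariant under the coordinate swap), and dense in $E^\dagger$ because its complement is the boundary $\overline{W}\setminus W$ of the open set $W$, which has empty interior. Applying \textbf{Dense SOCA} to $U$ yields an uncountable $T$; in the $U$-free case, $T$ is already $W$-free because $W\subseteq U$. In the $U$-connected case, $T^\dagger\subseteq U$, so every point of $T^\dagger\setminus W$ lies in $\operatorname{int}(W^c)$, partitioning $T^\dagger$ into two open pieces and making $W\cap T^\dagger$ clopen in $T^\dagger$. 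A final application of \textbf{Clopen SOCA} to $T$ with this clopen coloring produces the desired homogeneous subset for $W$.

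The only step demanding a new idea is the construction of $U$ in (5)$\Rightarrow$(1); once one observes that $W\cup\operatorname{int}(W^c)$ is automatically open, dense and symmetric, the rest is bookkeeping, and the underlying fact that the boundary of an open set has empty interior is standard point-set topology.
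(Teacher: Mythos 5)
Your proposal is correct and follows essentially the same route as the paper: the forward direction via Proposition \ref{sons of soca}, the converses (2),(3)$\Rightarrow$(1) by reduce-then-apply, and the two key ingredients for (4) and (5) --- namely Fact \ref{clopen to open dense} and the open dense coloring $W\cup\operatorname{int}(W^{c})$ with complement $\partial W$ --- are exactly the paper's Facts \ref{Open dense reduction implies clopen soca} and \ref{Open dense soca implies clopen reduction}. The only cosmetic difference is that you chain the two reductions directly for (4)$\Rightarrow$(1), where the paper packages the same argument as ``\textbf{Dense Reduction} implies \textbf{Clopen SOCA}'' and then routes through clause (2).
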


\begin{proof}
We already know that \textbf{SOCA} implies all the other statements. The equivalence of Clauses 2 and 3 with 1 have identical natural proofs: first reducing any open coloring to one of the special kind and then apply the corresponding weakening of \textbf{SOCA} to that special coloring. The rest follows directly from the following two observations:

\begin{fact}\label{Open dense reduction implies clopen soca}
\textbf{Dense reduction} implies \textbf{Clopen SOCA}.
\end{fact}

\begin{fact}\label{Open dense soca implies clopen reduction}
\textbf{Dense SOCA} implies \textbf{Clopen reduction}.
\end{fact}

Fact \ref{Open dense reduction implies clopen soca} is a direct consequence of Fact \ref{clopen to open dense}. To show Fact \ref{Open dense soca implies clopen reduction}, let $W$ be an open coloring over $E$. Notice that $W\cup \mbox{int}(W^{c})$ is an open dense coloring over $E$, since its complement is $\partial W$.\footnote{$\partial W$ denotes the boundary of $W$. It is nowhere dense and it can be define either as $\overline{W}\cap\overline{W^{c}}$ or as $\overline{W}\setminus \mbox{int}(W)$}

Using \textbf{Dense SOCA} over $W\cup \mbox{int}(W^{c})$ we either get a closed-homogeneous uncountable set $T$, in which case, $T^{\dagger}\cap (W\cup \mbox{int}(W^{c}))=\emptyset$, so $T^{\dagger}\cap W=\emptyset$ is clopen, or we get an uncountable set such that $T^{\dagger}\subseteq W\cup \mbox{int}(W^{c})$. Because both $W$ and $W^{c}$ are open in $T^{\dagger}$ we have that $T$ reduces $W$ to a clopen coloring. This concludes the proof of the facts.
\end{proof}

We close this section with a few more natural questions:

\begin{question}
Is \textbf{Dense SOCA} equivalent to \textbf{Clopen Reduction}?
\end{question}

\begin{question}
Is \textbf{Clopen SOCA} equivalent to \textbf{Dense Reduction}?.
\end{question}

\begin{question}
Is \textbf{Dense SOCA} weaker than \textbf{SOCA}?
\end{question}

\begin{question}\label{Question Clopen equal SOCA}
Is \textbf{Clopen SOCA} weaker than \textbf{SOCA}?
\end{question}

Some of these would be settled by a positive answer to the following:

\begin{conjecture}
Every open coloring can be reduced to a clopen or to an open dense coloring. \end{conjecture}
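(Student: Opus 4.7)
The plan is to set up a dichotomy on $\partial W = \overline{W}\setminus W \subseteq E^{\dagger}$, which is closed (being the intersection $\overline{W}\cap (E^{\dagger}\setminus W)$ of two closed sets since $W$ is open) and nowhere dense (every nonempty open subset of $\overline{W}$ meets $W$). First, ask whether there is an uncountable $T\subseteq E$ with $T^{\dagger}\cap \partial W=\emptyset$. If so, then $W\cap T^{\dagger}$ is open in $T^{\dagger}$ automatically, and its complement in $T^{\dagger}$ equals $T^{\dagger}\cap \text{int}(W^{c})$, which is also open, so $W\cap T^{\dagger}$ is clopen in $T^{\dagger}$ and $W$ has been reduced to a clopen coloring.

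If no such $T$ exists, then every uncountable $T\subseteq E$ satisfies $T^{\dagger}\cap \partial W\neq \emptyset$, and the goal becomes producing an uncountable $T$ for which $W\cap T^{\dagger}$ is dense in $T^{\dagger}$ (automatically open since $W$ is open). Two features would drive the construction: first, $W\cup \text{int}(W^{c})$ is already open dense in $E^{\dagger}$ with complement $\partial W$, as used in Fact \ref{Open dense soca implies clopen reduction}, so only the slices $\{x : (t,x)\in \text{int}(W^{c})\}$ need to be avoided; second, $E^{\dagger}$ has a countable base $\{U_{n}\}_{n<\omega}$. One could then attempt a transfinite recursion of length $\omega_{1}$ choosing $t_{\alpha}\in E$ so as to avoid each previously-defined $\text{int}(W^{c})$-slice, and so that, for every basic $U_{n}$ already meeting $T^{\dagger}$ at a previous pair with $U_{n}\cap W\neq \emptyset$, a new pair $(t_{\beta},t_{\alpha})\in U_{n}\cap W$ is forced.

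The main obstacle is verifying that this recursion has room to run in ZFC. At each stage one must guarantee an uncountable pool of candidates $t_{\alpha}$ compatible with countably many avoidance and hitting requirements, and the only plausible way to do so is to exploit the unavoidability of $\partial W$ supplied by the failure of the first alternative. Making this precise amounts to proving a ZFC version of the ``closed half'' of \textbf{SOCA} applied to the closed symmetric set $\overline{\text{int}(W^{c})}$, which is essentially as strong as the conjecture itself. Without a genuinely new combinatorial lemma about closed nowhere dense symmetric subsets of $E^{\dagger}$ the recursion cannot be pushed through, and this is almost certainly why the statement is posed as an open conjecture rather than a theorem.
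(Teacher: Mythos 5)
This statement is posed in the paper as an open conjecture; the paper contains no proof of it, so there is nothing to match your argument against. What you have written is not a proof but a correct diagnosis of why the problem is hard, and you say so yourself in your last paragraph. The only part you actually establish is the easy direction of your dichotomy: if some uncountable $T\subseteq E$ has $T^{\dagger}\cap\partial W=\emptyset$, then $W\cap T^{\dagger}$ is clopen in $T^{\dagger}$, since $T^{\dagger}$ decomposes into the two relatively open pieces $T^{\dagger}\cap W$ and $T^{\dagger}\cap\mathrm{int}(W^{c})$. That observation is already present in the paper (it is exactly how the paper derives \textbf{Clopen Reduction} from \textbf{Dense SOCA}, by passing to the open dense coloring $W\cup\mathrm{int}(W^{c})$ whose complement is $\partial W$), so it contributes nothing new.

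The genuine gap is the entire second case. From the hypothesis that \emph{every} uncountable $T$ satisfies $T^{\dagger}\cap\partial W\neq\emptyset$ you need to manufacture an uncountable $T$ on which $W$ is relatively dense, and your transfinite recursion has no engine: at stage $\alpha$ you must hit $U_{n}\cap W$ with a pair $(t_{\beta},t_{\alpha})$ for each basic open $U_{n}$ already charged, while simultaneously avoiding countably many slices of $\mathrm{int}(W^{c})$, and nothing in the case hypothesis guarantees that the set of admissible $t_{\alpha}$ is nonempty, let alone uncountable. Indeed the case hypothesis is a statement about $\partial W$ being ``unavoidable,'' which says nothing about $\mathrm{int}(W^{c})$ having small slices; a priori $\mathrm{int}(W^{c})$ could be so large that every uncountable $T$ has $W\cap T^{\dagger}$ non-dense while still meeting $\partial W$. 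Note also that the Baire-space argument of Theorem~\ref{Baire clopen} shows that when $E$ is Baire the first alternative always obtains, so any counterexample or hard case for the conjecture lives entirely in non-Baire territory, where the Kuratowski--Ulam slicing you would need is unavailable. Unless you can prove the ``genuinely new combinatorial lemma'' you allude to, the conjecture remains open.
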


As mentioned in the previous section, it is not known if \textbf{OCA$_{[ARS]}$} implies \textbf{SOCA}. If it turns out that the answer is negative, then the fact that \textbf{\mbox{OCA}}$_{[ARS]}$ implies \textbf{Clopen SOCA} (a clopen coloring is also a cover of $E^{\dagger}$ by open sets) settles Question \ref{Question Clopen equal SOCA} in the positive.

\section{Colorings and Baire spaces}\label{Section Baire}

Galvin \cite{galvin1968partition} (see also \cite{hart2011arcs}) showed that \textbf{SOCA} is true for every Polish space. In particular, in uncountable Polish spaces all open colorings can be reduced to clopen ones. Hence also any space that contains an uncountable Polish space has the same reduction property.
These \textbf{ZFC} results suggest the following:

\begin{question}
What is the largest family of topological spaces $\mathcal{X}$ such that \textbf{ZFC} implies \textbf{SOCA($\mathcal{X}$)}?
\end{question}

In a related work, Ramos-Garc\'{\i}a and Corona-Garc\'{\i}a \cite{ArietOCA} study the class of topological spaces for which \textbf{OGA} follows from \textbf{ZFC}.

\smallskip

Recall that a metric space $E$ is a \emph{Baire} space if no non-empty open subsets of $E$ is \emph{meager}, i.e. is not a union of countably many nowhere dense sets. Polish spaces are Baire by the Baire Category Theorem. Nevertheless, not all Baire spaces are Polish or contain a Polish space. For example \emph{Luzin} sets or \emph {Generalized Luzin} sets (\cite{kunen2014set}) are Baire spaces with no Polish subspace (these sets exists, for example, under \textbf{CH} or \textbf{MA}+$\neg$ \textbf{CH}, respectively).

Furthermore, as shown in Kunen's book \cite{kunen2014set}, it is possible to have a Luzin set as counterexample to \textbf{SOCA}, so it is consistent that not all Baire metric spaces satisfy \textbf{SOCA}. We will show here that, if \textbf{Clopen SOCA} is true for Baire spaces, then \textbf{SOCA} is also true for them. In order to prove this, the following notation will be useful:

\begin{notat}
Given $A\subseteq E^2$ and $e\in E$ let
\[A_e=\left\{y\in E \ : \ (y,e)\in A\right\}.\]
\end{notat}

\begin{theorem}\label{Baire clopen} Every open coloring over a separable metric Baire space can be reduced to a clopen one.
\end{theorem}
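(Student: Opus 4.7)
My plan is to locate an uncountable $T \subseteq E$ with $T^{\dagger} \cap \partial W = \emptyset$; from this $T^{\dagger} \cap W = T^{\dagger} \setminus \mbox{int}(W^{c})$ will be clopen in $T^{\dagger}$, as desired. Observe first that $\partial W$ is closed and nowhere dense in $E^{2}$: it is closed by definition, and any nonempty open set contained in $\overline{W}$ must meet the open set $W$, so $\overline{W} \setminus W$ has empty interior.

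The core will be a Kuratowski-Ulam-type observation extracted directly from the closedness of $\partial W$ and the separability of $E$: the set
\[C = \{y \in E : (\partial W)_{y} \mbox{ is nowhere dense in } E\}\]
is comeager in $E$. Since $\partial W$ is closed in $E^{2}$, each fiber $(\partial W)_{y}$ is closed in $E$, so $y \notin C$ iff $B_{n} \subseteq (\partial W)_{y}$ for some basic open $B_{n}$ from a fixed countable basis of $E$. The sets $F_{n} = \{y : B_{n} \times \{y\} \subseteq \partial W\}$ are readily checked to be closed (using continuity of the inclusions $x \mapsto (x,y)$ and closedness of $\partial W$) and nowhere dense (otherwise $B_{n} \times \mbox{int}(F_{n})$ would be a nonempty open subset of $\partial W$), exhibiting $E \setminus C = \bigcup_{n} F_{n}$ as meager.

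With $C$ in hand, I would recursively build $T = \{x_{\alpha} : \alpha < \omega_{1}\} \subseteq C$: at stage $\alpha$ the forbidden set for $x_{\alpha}$ is
\[\{x_{\xi} : \xi < \alpha\} \cup \bigcup_{\xi < \alpha} (\partial W)_{x_{\xi}},\]
and because every previously chosen $x_{\xi}$ lies in $C$, this is a countable union of nowhere dense sets, hence meager in $E$. Since $E$ is Baire, the comeager set $C$ minus a meager set is still dense and in particular nonempty, so a valid $x_{\alpha}$ exists. Symmetry of $W$ (and therefore of $\partial W$) ensures that at the end $(x_{\alpha}, x_{\beta}) \notin \partial W$ for any distinct $\alpha, \beta < \omega_{1}$, giving $T^{\dagger} \cap \partial W = \emptyset$ and the clopen reduction.

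The main obstacle I anticipate is the Kuratowski-Ulam step: one wants fiberwise smallness from the global smallness of $\partial W$, yet we have no a priori guarantee that $E^{2}$ is itself Baire. The saving feature is precisely that $\partial W$ is closed (not merely meager or nowhere dense), which turns the relevant fibers into closed sets and lets the second-countable, bare-hands $F_{\sigma}$ bookkeeping above replace any appeal to the full Kuratowski-Ulam theorem.
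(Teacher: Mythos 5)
Your proof is correct and follows essentially the same route as the paper's: both isolate the set of points $y$ whose fiber $(\partial W)_y$ fails to be nowhere dense, show that this set is meager, and then run the same transfinite recursion using the Baire property of $E$ and the symmetry of $W$. The only difference is that where the paper simply cites the Kuratowski--Ulam theorem, you verify the needed instance by hand from the closedness of $\partial W$ and second countability --- a harmless (and arguably cleaner) substitution, since, as you correctly suspect, the relevant direction of Kuratowski--Ulam requires only a countable base for the second factor and no Baireness of $E^{2}$.
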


\begin{proof}
Let $E$ be a separable metric Baire space and let $W\subseteq E^{\dagger}$ be open and symmetric.

By definition, $\partial W$ is meager (nowhere dense) and, by the Kuratowski-Ulam Theorem, we have that the set
\[T_{0}=\{e\in E \ :\ (\partial W)_{e}\mbox{ is not nowhere dense}\}\]
is meager.

We can recursively construct a sequence $\langle x_{\xi} : \xi\in \omega_{1}\rangle$ contained in $E\setminus T_{0}$ such that, given $\alpha<\beta$, $x_{\beta}\notin (\partial W)_{x_{\alpha}}$. Once we construct this sequence we will be done: by symmetry of $W$, $(e,e')\in \partial W$ if and only if $(e',e)\in \partial W$. So, the above sequence will have the property that, given $\alpha$ and $\beta$, $(x_{\alpha}, x_{\beta})\notin \partial W$. In other words, there is an open set $O$ of $E^{\dagger}$ such that $ (x_{\alpha}, x_{\beta})\in O$ and either $O\cap W=\emptyset$ or $O\subseteq W$. Therefore, $\{x_{\xi}:\xi\in \omega_{1}\}^{\dagger}\cap W$ will be clopen in $\{x_{\xi}:\xi\in \omega_{1}\}^{\dagger}$.

For the construction, assume that we already selected $x_{\xi}$ for $\xi<\alpha$, for $\alpha<\omega_{1}$. Since $\alpha$ is countable, $E$ is a Baire space and $x_{\xi}\notin T_{0}$ we have that $E\setminus \left( \bigcup_{\xi<\alpha}(\partial W)_{x_{\xi}}\cup T_{0} \right)\neq \emptyset$. We just let $x_{\alpha}$ be an element of $E\setminus \left( \bigcup_{\xi<\alpha}(\partial W)_{x_{\xi}}\cup T_{0} \right)$.\end{proof}

\begin{corollary}
\textbf{Dense SOCA(Baire)} is true in \textbf{ZFC}.
\end{corollary}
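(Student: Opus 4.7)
The plan is to piggyback on Theorem \ref{Baire clopen}, exploiting the fact that for an open \emph{dense} coloring the boundary automatically captures the entire complement, so the set produced by the construction in that theorem is not merely clopen-reducing but genuinely $W$-connected.

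Specifically, given an uncountable separable metric Baire space $E$ and an open dense coloring $W\subseteq E^{\dagger}$, I would first record the elementary identity
\[
\partial W \;=\; \overline{W}\setminus \mathrm{int}(W) \;=\; E^{\dagger}\setminus W,
\]
which holds because $W$ is open (so $\mathrm{int}(W)=W$) and dense (so $\overline{W}=E^{\dagger}$). Then I would re-open the construction inside the proof of Theorem \ref{Baire clopen}: that proof does not only yield an unspecified clopen reduction, it actually builds an uncountable sequence $\langle x_{\xi}:\xi<\omega_{1}\rangle$ with $(x_{\alpha},x_{\beta})\notin \partial W$ for every $\alpha\neq\beta$. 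Feeding our open dense $W$ into that construction and applying the displayed identity, the condition $(x_{\alpha},x_{\beta})\notin\partial W$ upgrades directly to $(x_{\alpha},x_{\beta})\in W$, so $T=\{x_{\xi}:\xi<\omega_{1}\}$ satisfies $T^{\dagger}\subseteq W$ and is the uncountable $W$-connected set required by \textbf{Dense SOCA(Baire)}.

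There is essentially no obstacle here beyond noticing this identity; the analytic work (Kuratowski--Ulam applied to the nowhere-dense $\partial W$ and the recursive Baire-category choice of $x_{\alpha}$) is already packaged inside Theorem \ref{Baire clopen}. A tempting alternative would be to invoke Theorem \ref{Baire clopen} purely as a black box, obtain an uncountable $T$ with $W\cap T^{\dagger}$ clopen in $T^{\dagger}$, and then try to combine this with Fact \ref{clopen to open dense}; however that route is more delicate, since density of $W$ in $E^{\dagger}$ does not automatically descend to density of $W\cap T^{\dagger}$ in the subspace $T^{\dagger}$. Exposing the construction from Theorem \ref{Baire clopen} and reading off the dense case is therefore the cleanest path.
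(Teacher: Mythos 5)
Your proposal is correct and is essentially identical to the paper's own argument: the paper also notes that for an open dense $W$ one has $W^{c}=\partial W$, so the sequence constructed in Theorem \ref{Baire clopen}, which avoids $\partial W$ on all pairs, is automatically $W$-connected. Your remark that the black-box route through clopen reduction plus Fact \ref{clopen to open dense} is more delicate is a sensible aside, but the direct reading-off of the construction is exactly what the paper does.
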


\begin{proof}
If in the proof above we assume that $W$ is open dense, then $W^{c}=\partial W$. So, the proof above generates a $W$-connected set.
\end{proof}

\begin{corollary}
\textbf{SOCA(Baire)} and \textbf{Clopen SOCA(Baire)} are equivalent.
\end{corollary}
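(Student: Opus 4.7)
The forward direction, that \textbf{SOCA(Baire)} implies \textbf{Clopen SOCA(Baire)}, is immediate because every clopen coloring is in particular open.

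For the converse, the plan is to follow the reduction-then-homogenize pattern used throughout Section \ref{Section SOCA}. Assume \textbf{Clopen SOCA(Baire)}, and let $E$ be an uncountable separable metric Baire space carrying an open symmetric coloring $W \subseteq E^{\dagger}$. First I would apply Theorem \ref{Baire clopen} to obtain an uncountable $T' \subseteq E$ on which $W \cap (T')^{\dagger}$ is clopen in the subspace topology of $(T')^{\dagger}$. Next I would apply \textbf{Clopen SOCA(Baire)} to the space $T'$ with this clopen coloring, extracting an uncountable homogeneous $T \subseteq T'$. Since $T \subseteq T' \subseteq E$ and $T^{\dagger} \cap W = T^{\dagger} \cap (W \cap (T')^{\dagger})$, the set $T$ is then $W$-homogeneous as a subset of $E$, witnessing \textbf{SOCA(Baire)}.

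The delicate step is the application of \textbf{Clopen SOCA(Baire)} to $T'$, since the axiom requires the underlying space to itself be Baire. The recursion in the proof of Theorem \ref{Baire clopen} selects each $x_{\alpha}$ freely from a comeager subset of $E$ and does not, as stated, guarantee that the enumerated set $T'$ is Baire in its induced topology. I expect this obstacle to be addressable by a modest refinement of the construction --- for instance, by performing the recursion inside a fixed dense $G_{\delta}$ subspace $D \subseteq E$, which is itself Baire by the Baire Category Theorem, and by choosing the $x_{\alpha}$ within $D$ carefully enough to force $T'$ to be Baire --- after which the two-step argument above goes through. This is the only non-routine point in the proof.
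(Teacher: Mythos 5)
Your two-step plan (reduce via Theorem \ref{Baire clopen}, then homogenize with \textbf{Clopen SOCA(Baire)}) is exactly the argument the paper intends --- the corollary is stated without proof immediately after Theorem \ref{Baire clopen}, in the spirit of the ``reduce, then apply the weak axiom'' schema of Section \ref{Section SOCA} --- and you have put your finger on precisely the point where that schema does not transfer to a restricted class of spaces: the reduced set $T'$ must itself lie in the class for the weak axiom to apply. So the issue you raise is genuine. The problem is that your proposed repair does not close it. Running the recursion inside a dense $G_{\delta}$ subspace $D$ makes $D$ Baire, but the object you extract is still a set of size $\aleph_1$ obtained by picking one point at a time from comeager sets, and no choice of those points can in general force it to be Baire in itself. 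Indeed, under \textbf{MA}$_{\aleph_1}$ together with $\neg$\textbf{CH}, every separable metric space of size $\aleph_1$ is a $Q$-set (every subset is a relative $F_{\sigma}$); such a space has only countably many isolated points, and writing it as the union of a countable dense set $D_0$ (meager, since points off the isolated part are nowhere dense) and its complement (a relative $F_{\sigma}$ all of whose closed pieces miss the dense set $D_0$ and hence are nowhere dense) exhibits it as meager in itself. So in such a model no $\omega_1$-length recursion of this kind can output a Baire $T'$, and ``choosing the $x_{\alpha}$ carefully'' is not a mechanism --- it is the missing proof.

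Consequently the converse direction remains open in your write-up: after the reduction you have a clopen coloring on an uncountable set that you cannot legitimately feed to \textbf{Clopen SOCA(Baire)}. To complete the argument you would need either a strengthening of Theorem \ref{Baire clopen} producing a Baire (or otherwise admissible) reduced subspace, or a different route that applies \textbf{Clopen SOCA(Baire)} only to spaces already known to be Baire (for instance to comeager subspaces of $E$, which are Baire, rather than to $\aleph_1$-sized selections). The paper itself supplies no such step, so you are not overlooking a trick that the text provides; but as it stands your proof of the nontrivial implication is incomplete at exactly the point you flagged.
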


The following is a variation on a classical proof of Todor\v{c}evi\'c in \cite{todorcevic1989partition}.

\begin{theorem}\label{Luzin entangled}
Assuming \textbf{CH} there is a 2-entangled Luzin set.
\end{theorem}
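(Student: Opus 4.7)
The plan is a single transfinite recursion of length $\omega_1$ that absorbs both the standard Luzin construction and a Todor\v{c}evi\'{c}-style construction of an entangled set. Under \textbf{CH}, one enumerates the closed nowhere dense subsets of $\mathbb{R}$ as $\{F_\alpha : \alpha<\omega_1\}$ and, in parallel, the strictly monotone Borel functions $\mathbb{R}\to\mathbb{R}$ as $\{f_\alpha : \alpha<\omega_1\}$; there are only $\mathfrak{c}=\aleph_1$ of the latter, since a monotone real function is determined by its restriction to a countable dense set together with its values at its countable set of discontinuities.

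Write $A=\{x_\alpha : \alpha<\omega_1\}$, built recursively. At stage $\alpha$, letting $C_\alpha=\{x_\beta : \beta<\alpha\}$, I would pick
\[
x_\alpha\in\mathbb{R}\setminus\Bigl(\bigcup_{\beta\leq\alpha}F_\beta \;\cup\; C_\alpha \;\cup\; \bigcup_{\beta<\alpha}\bigl(f_\beta(C_\alpha)\cup f_\beta^{-1}(C_\alpha)\bigr)\Bigr).
\]
The first clause contributes a meager set, the remaining clauses contribute countably many points, so the complement is nonempty and $x_\alpha$ is always available. The Luzin property is then immediate: $A\cap F_\alpha\subseteq\{x_\beta : \beta\leq\alpha\}$ is countable for every $\alpha$.

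To verify $2$-entanglement, suppose for contradiction that there is an uncountable pairwise disjoint family $\{\{p_\xi,q_\xi\} : \xi<\omega_1\}$ of pairs in $A$ with $p_\xi<q_\xi$ and with $p_\xi\mapsto q_\xi$ order preserving (the order-reversing case is symmetric, handled by placing both types into the enumeration). This partial strictly monotone function admits a strictly monotone extension to all of $\mathbb{R}$, obtained by choosing any strictly monotone value in the appropriate supremum/infimum interval on each gap; the extension is monotone, hence Borel, and so coincides with some $f_\beta$. The construction forces $f_\beta(x_\alpha)\neq x_\gamma$ whenever $\alpha,\gamma>\beta$ are distinct, so any off-diagonal pair $(x,f_\beta(x))$ lying in $A\times A$ must have at least one coordinate of index $\leq\beta$; since $f_\beta$ is injective, this traps $\{p_\xi : \xi<\omega_1\}$ inside the countable set $\{x_\delta : \delta\leq\beta\}\cup f_\beta^{-1}(\{x_\delta : \delta\leq\beta\})$, a contradiction.

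The main obstacle is the descriptive-set-theoretic reduction in the preceding paragraph: one must show that the otherwise lawless partial monotone function $p_\xi\mapsto q_\xi$ is captured by some $f_\beta$ in the enumeration. The two facts needed are that every strictly monotone partial function $D\to\mathbb{R}$ with $D\subseteq\mathbb{R}$ extends to a strictly monotone function on $\mathbb{R}$, and that every monotone function $\mathbb{R}\to\mathbb{R}$ is Borel and there are at most $\mathfrak{c}$ of them. Both are routine but must be invoked precisely. A minor secondary point is bookkeeping in the simultaneous enumeration of $\{F_\alpha\}$ and $\{f_\alpha\}$, which is handled by interleaving.
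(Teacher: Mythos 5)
Your reduction of non-entanglement to an uncountable order-preserving (or order-reversing) injection $g\colon\{p_\xi\}\to\{q_\xi\}$ between disjoint subsets of $A$ is correct, and the Luzin half of the construction is fine. The gap is in the step you flag as routine: \emph{a strictly monotone partial function on a subset of $\mathbb{R}$ need not extend to a strictly monotone function on all of $\mathbb{R}$}. Take $P=P_1\cup P_2$ with $P_1\subseteq(-1,0)$, $\sup P_1=0\notin P_1$, $P_2\subseteq(1,2)$, $\inf P_2=1\notin P_2$, and let $g$ be strictly increasing on $P$ with $\sup g(P_1)=\inf g(P_2)=c$. For every $x$ in the nondegenerate gap $(0,1)$, any monotone extension $F$ must satisfy $F(x)\geq\sup g(P_1)=c$ and $F(x)\leq\inf g(P_2)=c$, so $F$ is constant on $(0,1)$ and cannot be strictly monotone. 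Since the hypothetical $g$ produced by a failure of entanglement is completely lawless, you cannot exclude this configuration; such a $g$ is then not captured by your enumeration of strictly monotone functions and the contradiction never arrives. The natural repair --- enumerate all (non-strict) monotone functions, of which there are still only $\mathfrak c$ --- breaks the other half of your diagonalization: $f_\beta^{-1}(x_\gamma)$ can now be a nondegenerate interval, and countably many such preimages can cover $\mathbb{R}$, so the recursion can get stuck. This is patchable (avoid only singleton preimages, and observe that the countably many flat parts of a monotone extension of the injective $g$ each contain at most one $p_\xi$), but that patch is a missing idea, not bookkeeping.

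For contrast, the paper's proof avoids total extensions altogether: it diagonalizes against the $\mathfrak c$ partial continuous functions with $G_\delta$ domain, so that any such $g$ must have oscillation $|\omega_{g}(s)|\geq 2$ at continuum many points $s$, and then extracts \emph{both} order patterns directly from the oscillation, by fixing a rational $r$ separating two accumulation values $a_s<r<b_s$ and choosing $t_0,t_1,s_0,s_1$ whose images straddle $r$. That argument needs no monotonicity of $g$ and no extension lemma, and (as Note 1 of the paper records) gives the entangledness part outright in \textbf{ZFC} for families of size $\mathfrak c$; your route, once repaired as above, is more elementary but relies essentially on the reduction to a monotone $g$.
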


\begin{proof}
Let $\{f_{\alpha}: \alpha<\mathfrak{c}\}$ be an enumeration of all continuous functions from $G_{\delta}$ subsets of $\mathds{R}$ to $\mathds{R}$ and let $\mathds{R}=\{x_{\alpha} :\alpha<\mathfrak{c}\}$ be an enumeration of the reals. Finally, let $\{N_{\alpha}: \alpha<\mathfrak{c}\}$ be an enumeration of all closed nwd sets.

We will construct the $2$-entangled Luzin set $\mathcal{S}\subseteq \mathds{R}$ by recursion. Assume that we already have $\mathcal{S}_{\alpha}=\{x_{\gamma_{\xi}}: \xi<\alpha\}$. Let
\[\gamma_{\alpha}=\min\{\beta : x_{\beta}\notin (\bigcup_{\xi<\alpha}N_{\xi})\cup B_{\alpha}=\emptyset\}\]
where \[B_{\alpha}=\{f_{\chi}(x_{\gamma_{\xi}}): \xi, \chi<\alpha\}.\]
Notice that $B_{\alpha}$ is meager (it is countable) and $(\bigcup_{\xi<\alpha}N_{\xi}\cup B_{\alpha})$ is a countable union of nwd (by \textbf{CH}), so they do not cover $\mathds{R}$. This shows that $\mathds{R}\setminus (\bigcup_{\xi<\alpha}N_{\xi}\cup B_{\alpha})$ is non empty and $\gamma_{\alpha}$ is well defined.

It is clear from the construction, and from the assumption that $\mathfrak{c}=\aleph_1$, that $\mathcal{S}$ is a Luzin set. To show that $\mathcal{S}$ is $2$-entangled we  follow the proof of Lemma 4.2 by Todor\v{c}evi\'c in \cite{todorcevic1989partition}. 
Let $\{(x_{\alpha_{\xi}}, x_{\beta_{\xi}}): \xi<\mathfrak{c}\}\subseteq \mathds{R}^{2}$ be a collection of size continuum of disjoint $2$-tuples of $\mathcal{S}$.

Let \[K=\{x_{\alpha_{\xi}}: \alpha_{\xi}< \beta_{\xi}<\mathfrak{c}\}.\] We can assume that this set is of size continuum. If not, we can run the argument interchanging the roles of $\alpha_{\xi}$ and $\beta_{\xi}$.

Now, we can define the function $g:K\rightarrow \mathds{R}$ such that \[x_{\alpha_{\xi}}\mapsto x_{\beta_{\xi}}.\]

Furthermore, define the set \[K_{0}=\{s\in K:|\omega_{g}(s)|\geq 2\},\] where\footnote{Here $B_{\frac{1}{n}}^{K}(s)$ is the ball of radius $\frac{1}{n}$ with center $s$ in $K$, a subset of $\mathds{R}$. } $\omega_{g}(s)=\bigcap_{n\in \omega}\overline{g[B_{\frac{1}{n}}^{K}(s)]}$ is the oscillation of $g$ at $s$, i.e., all the accumulation points of the images (under $g$) of sequences that converge to $s$. Notice that $|\omega_{g}(s)|=1$ if and only if $g$ is continuous at $s$.

Recall that any partial continouos function from $\mathds{R}^{n}$ to $\mathds{R}$ can be extended to a partial function whose domain is a $G_{\delta}$ set. With this and our construction of $\mathcal{S}$ we have that the set $K_{0}$ is of size continuum.

Given $s\in K_{0}$, let $a_{s}$, $b_{s}$ be two distinct elements in $\omega_{g}(s)$. Without loss of generality, we can assume that $a_{s}<b_{s}$. Let $r\in \mathds{Q}$ such that $a_{s}<r<b_{s}$. Since we only have countably many rational numbers, we may shrink $K_0$ in such a way that for all $s\in K_{0}$ the rational number $r$ is the same. Notice that $K_{0}$ still has size continuum. 

Take $t, s\in K_{0}$ such that $t< s$ and take disjoint intervals $I_{t}, I_{s}$ such that $t\in I_{t}$ and $s\in I_{s}$. By the definition of $a_{t}$, $a_{s}$, $b_{t}$ and $b_{s}$ there are $t_{0},t_{1}\in K\cap I_{t}$ and $s_{0}, s_{1}\in I_{s}\cap K$ such that $g(t_{0}), g(s_{0})<r<g(t_{1}),g(s_{1})$. Then for the pairs $(t_{0},g(t_{0}))$, $(s_{1}, g(s_{1}))$ we have $t_{0}<s_{1}$ and $g(t_{0})<g(s_{1})$; and for the pair $(t_{1}, g(t_{1}))$, $(s_{0}, g(s_{0}))$ we have $t_{1}<s_{0}$ but $g(t_{1})>g(s_{0})$.
\end{proof}

\begin{note}\label{entangled note}
In the proof of Theorem \ref{Luzin entangled}, we index the sequence with $\mathfrak{c}$ since the entangled part is true in \textbf{ZFC}. The use of \textbf{CH} is only to ensure that the set is Luzin.
\end{note}

As every Luzin set is Baire, we have the following:

\begin{corollary}
In a model where \textbf{CH} is true, \textbf{SOCA(Baire)} is false.
\end{corollary}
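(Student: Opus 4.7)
The plan is to combine Theorem \ref{Luzin entangled} with the earlier refutation of $2$-entangled sets from \textbf{Clopen SOCA}, together with the equivalence of \textbf{SOCA(Baire)} and \textbf{Clopen SOCA(Baire)} (the corollary following Theorem \ref{Baire clopen}). Under \textbf{CH}, Theorem \ref{Luzin entangled} yields a $2$-entangled Luzin set $S\subseteq\mathds{R}$, and every Luzin set is Baire in its subspace topology, so $S$ itself is an uncountable Baire separable metric space.

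The earlier refutation of $2$-entangled sets does not act on the set directly but on its square: the increasing open coloring $W = \{((a,b),(c,d)) : a<c \Leftrightarrow b<d\}$ on $(S^{2})^{\dagger}$ admits a reduction to a clopen coloring on an uncountable subset $T\subseteq S^{2}$ whose distinct members have distinct first and second coordinates. An uncountable $W$-homogeneous subset of such a $T$ is then the graph of a strictly monotone bijection between uncountable subsets of $S$, in direct contradiction with the $2$-entanglement of $S$. Thus, to derive the failure of \textbf{SOCA(Baire)} from the existence of $S$, the axiom must apply to the space in which this argument takes place.

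The main technical verification is therefore that $S^{2}$ is itself a Baire space. I would adjust the recursive construction in Theorem \ref{Luzin entangled} to ensure that $S$ has uncountable intersection with every non-empty open subset of $\mathds{R}$; then $S^{2}$ is dense in $\mathds{R}^{2}$, and any $N$ nowhere dense in $S^{2}$ has $\overline{N}^{\mathds{R}^{2}}$ with empty interior in $\mathds{R}^{2}$, since otherwise the interior would be an open set disjoint from the dense $S^{2}$. Kuratowski-Ulam then yields that, for a countable union of such closures, comeager-many vertical fibres are meager in $\mathds{R}$; the Luzin property of $S$ forces these fibres, once intersected with $S$, to be countable, and this cannot cover an uncountable slice of any non-empty relatively open subset of $S^{2}$. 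Once $S^{2}$ is known to be Baire, the earlier refutation of $2$-entangled sets transports to the Baire setting and falsifies \textbf{SOCA(Baire)}.
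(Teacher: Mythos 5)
There is a genuine gap in the final step. You correctly observe that the increasing coloring is only an open (indeed clopen) symmetric coloring on an uncountable set $T\subseteq S^{2}$ whose members have pairwise distinct first and second coordinates; but then you verify Baireness of the wrong space. \textbf{SOCA(Baire)} would have to be applied to $T$ itself, and Baireness of $S^{2}$ does not pass down to the uncountable subspace $T$: such a $T$ is essentially the graph of an arbitrary partial bijection of $S$, which is typically nowhere dense in $S^{2}$ (and in $\mathbb{R}^{2}$), and an uncountable subspace of a Baire space can perfectly well be meager in itself. Applying \textbf{SOCA(Baire)} to $S^{2}$ directly is hopeless for a second reason: on points sharing a coordinate the set $\{((a,b),(c,d)):a<c\leftrightarrow b<d\}$ is neither symmetric nor open, and any reasonable open modification of it on $(S^{2})^{\dagger}$ admits trivial uncountable free sets (e.g. a vertical slice $\{a\}\times S$), so no contradiction with $2$-entanglement can be extracted from a homogeneous subset of $S^{2}$. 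Thus ``once $S^{2}$ is known to be Baire, the refutation transports'' does not follow.

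The repair is to make the \emph{witnessing set of pairs} Luzin, not the underlying line set: redo the recursion of Theorem \ref{Luzin entangled} in the plane, choosing at stage $\alpha$ a pair $(a_{\alpha},b_{\alpha})\in\mathbb{R}^{2}$ with coordinates distinct from all earlier ones, avoiding the first $\alpha$ closed nowhere dense subsets of $\mathbb{R}^{2}$ (Kuratowski--Ulam guarantees such a pair exists at each countable stage) as well as the diagonalization set $B_{\alpha}$ coming from the continuous functions. The resulting $T\subseteq\mathbb{R}^{2}$ is Luzin in $\mathbb{R}^{2}$, hence Baire after discarding the countably many points lying in basic open sets meeting $T$ countably, the increasing coloring is clopen on $T^{\dagger}$, and the oscillation argument of Theorem \ref{Luzin entangled} shows it has no uncountable homogeneous set. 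This is in fact what the paper's one-line justification ``every Luzin set is Baire'' is implicitly invoking (and what Kunen's construction cited earlier in the section provides): the Baire counterexample is a Luzin set \emph{of pairs}, not the square of a Luzin set of reals.
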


With this we can conclude that \textbf{Dense SOCA(Baire)} is weaker than \textbf{Clopen SOCA(Baire)} and \textbf{SOCA(Baire)}.

\section{SOCA for larger uncountable sets}\label{Section big SOCA}

In this section we shall consider higher cardinal extensions of \textbf{SOCA}. This is motivated indirectly by the recent attempts to ``lift" Baumgartner's Theorem \cite{baumgartner1973alphall} to $\aleph_2$, i.e. to prove the consistency of \emph{every two $\aleph_2$-dense sets of reals are order isomorphic} (see \cite{moorebaumgartner, gilton2017side}), and directly by the work of Shelah, Abraham and Rudin's \cite{abraham1985consistency} where they ask if it is possible to have a version of \textbf{SOCA} for $\aleph_{2}$. It fits in the general program to investigate the behaviour of
the continuum when it is bigger than $\aleph_{2}$ (as discussed in \cite{todorvcevic1997comparing}).

A straightforward generalization of \textbf{SOCA} is to ask that the set $T$ is as big as the space or bigger than a certain cardinal.

\begin{definition}
\textbf{SOCA($\kappa$)} is the statement: \emph{For all separable metric spaces $E$ of size bigger or equal to $\kappa$ and all open symmetric subsets $W$ of $E^{\dagger}$ there exist $T\subseteq E$ such that $|T|\geq \kappa$ and either $T^{\dagger}\subset W$ (open-homogeneous) or $T^{\dagger}\cap W=\emptyset$ (closed-homogeneous).}
\end{definition}

For some spaces, this axiom can be derived from \textbf{SOCA} and \textbf{MA}.

\begin{theorem}[\textbf{SOCA} and \textbf{MA}]\label{SOCA from SOCA}
Given a separable metric space $E$ of size $\kappa< \mathfrak{c}$, with $cof(\kappa)\neq \aleph_{0}$, and $W\subseteq E^{\dagger}$ open and symmetric such that all closed-homogeneous sets are countable, there is an open-homogeneous set $T\subseteq E$ of size $\kappa$.
\end{theorem}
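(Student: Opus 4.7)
The plan is to build $T$ of size $\kappa$ by transfinite recursion, extending an initial uncountable $W$-connected set produced by \textbf{SOCA}, and using pigeonhole arguments together with \textbf{MA} to push the extension through.

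First I would apply \textbf{SOCA} to $E$; since closed-homogeneous sets are countable by hypothesis, the homogeneous set it produces must be an uncountable $W$-connected $T_0 \subseteq E$. Enumerate $T_0$ and continue by recursion, building $T = \{t_\alpha : \alpha < \kappa\}$ with each $T_\alpha := \{t_\beta : \beta < \alpha\}$ remaining $W$-connected. Limit stages are handled by taking unions. At a successor stage, given $W$-connected $T_\alpha$ with $|T_\alpha| = |\alpha| < \kappa$, I must find $t_\alpha \in E \setminus T_\alpha$ with $(t_\alpha, t) \in W$ for all $t \in T_\alpha$. Suppose for contradiction that no such element exists; then $E \setminus T_\alpha \subseteq \bigcup_{t \in T_\alpha} M_t$, where $M_t = \{e : (e,t) \notin W\}$. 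Since $|E \setminus T_\alpha| = \kappa$ and $|T_\alpha| < \kappa$, an infinite-cardinal-arithmetic pigeonhole forces some $|M_{t^*}| = \kappa$, the cofinality hypothesis entering to stabilise later cardinal estimates in the iteration below.

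Apply \textbf{SOCA} to the subspace $M_{t^*}$, whose closed-homogeneous subsets are countable (inherited from $E$), to produce an uncountable $W$-connected $S \subseteq M_{t^*}$. Writing $N(X) = \{e : \forall t \in X,\; (e,t) \in W \text{ or } e = t\}$, if $|S \cap N(T_\alpha \setminus \{t^*\})| \geq 2$, then $(T_\alpha \setminus \{t^*\}) \cup (S \cap N(T_\alpha \setminus \{t^*\}))$ is $W$-connected and strictly larger than $T_\alpha$, which yields the desired contradiction. Otherwise most of $S$ lies in $\bigcup_{t' \neq t^*} M_{t'}$, and a second pigeonhole yields $t' \in T_\alpha$ with $|M_{t^*} \cap M_{t'}| = \kappa$; iterate the \textbf{SOCA} argument on this intersection, and continue.

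The hard part will be controlling this inner iteration at transfinite stages: at limits of countable cofinality the intersection $\bigcap_i M_{t_i}$ could in principle collapse below $\kappa$. To push through I would appeal to \textbf{MA} (available since $\kappa < \mathfrak{c}$) applied to the ccc poset of finite $W$-connected subsets of $E$ with $\kappa$-many dense sets $D_e = \{p : e \in p\} \cup \{p : \exists t \in p,\; (e,t) \notin W\}$. Verifying the ccc property itself is the hardest technical step, to be done by a $\Delta$-system reduction followed by \textbf{SOCA} applied to an induced open coloring on the product space $E^n$, reducing to a pigeonhole over $n^2$ witness indices: when the indices agree one immediately extracts an uncountable closed-homogeneous subset, contradicting the hypothesis; the bipartite case of disagreeing indices is more delicate and requires an auxiliary application of \textbf{SOCA} to $E^2$ with a derived open coloring to again extract such a closed-homogeneous set.
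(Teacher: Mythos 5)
Your overall skeleton --- the poset of finite $W$-connected subsets, ccc via a $\Delta$-system argument plus \textbf{SOCA}, then \textbf{MA} --- matches the paper's proof, but both of the steps you yourself flag as hard have genuine gaps. First, the ccc verification. After the $\Delta$-system reduction you propose to pigeonhole over the $n^2$ possible witness indices $(i,j)$ for incompatibility. But these witnesses are assigned to \emph{pairs} of conditions, so extracting an uncountable subfamily on which all pairs share the same witness is an instance of the partition relation $\omega_1\rightarrow(\omega_1)^2_{n^2}$, which is false in \textbf{ZFC} (Sierpi\'nski); and even granting it, your ``bipartite'' case $i\neq j$ yields only a free set for a derived coloring on $E^{2}$, about which the hypothesis (countability of closed-homogeneous subsets \emph{of $E$}) says nothing. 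The paper's key move, which you are missing, is to exploit the openness of $W$ \emph{inside each condition}: since $(x^{i}_{\alpha},x^{j}_{\alpha})\in W$ for $i\neq j$, one chooses basic open sets $V_{i}\ni x^{i}_{\alpha}$ with $V_{i}\times V_{j}\subseteq W$, and uniformizes these over the countable basis so that uncountably many conditions share the same $V_{i}$'s. Then every off-diagonal pair $(x^{i}_{\alpha},x^{j}_{\beta})$ lies in $V_{i}\times V_{j}\subseteq W$ automatically, and only the diagonal pairs $(x^{i}_{\alpha},x^{i}_{\beta})$ remain; these are handled by $n$ successive applications of \textbf{SOCA} to the sets of $i$-th coordinates, each of which must return an uncountable $W$-connected set because closed-homogeneous sets are countable. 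No partition relation on pairs is needed.

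Second, the \textbf{MA} step. Your dense sets $D_{e}$ are indeed dense, but a filter meeting all of them only yields a \emph{maximal} $W$-connected set $G$ (every $e\notin G$ is $W$-disconnected from some point of $G$); nothing forces $|G|=\kappa$, and the remark that $\operatorname{cof}(\kappa)\neq\aleph_{0}$ ``stabilises cardinal estimates'' is not an argument. The paper instead uses that under \textbf{MA} every ccc poset of size $<\mathfrak{c}$ is the union of countably many filters ($\sigma$-centered); since each singleton is a condition, the unions of these filters are countably many $W$-connected sets covering $E$, and $\operatorname{cof}(\kappa)>\aleph_{0}$ then forces one of them to have size $\kappa$. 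That is where the cofinality hypothesis actually enters. Finally, your opening transfinite recursion (greedily extending a $W$-connected set and pigeonholing on the sets $M_{t}$) is a dead end that you correctly abandon; it can simply be deleted.
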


\begin{proof}
Let 
\[\mathds{C}_{E,W}=\{p\in [E]^{<\omega} : \forall x, y\in p \ (x\neq y\rightarrow (x,y)\in W)\}.\]
It is enough to show that this is a ccc poset. Once we have this, since it is of size $\kappa<\mathfrak{c}$, \textbf{MA} will imply that it is the union of countably many filters. Notice that the union of each filter is an open-homogeneous set and, since $\{e\}\in \mathds{C}_{E,W}$ for all $e\in E$, we have that $E$ is a countable union of open-homogeneous sets so, it has an open-homogeneous set of size $\kappa$ (using its cofinality).

In order to show that $\mathds{C}_{E,W}$ is ccc take $\{p_{\alpha} \ : \ \alpha<\omega_{1}\}\subseteq \mathds{C}_{E,W}$. We will show that there exist $\alpha\neq \beta$ such that $p_{\alpha}$ is compatible with $p_{\beta}$.

Since $|p_{\alpha}|\in \omega$ for all $\alpha$ we may assume that $\{p_{\alpha} \ : \ \alpha<\omega_{1}\}$ is a $\Delta$-system with root $r$. Furthermore, we may assume that $|p_{\alpha}\setminus r|=|p_{\beta}\setminus r|=m$. So, we can write $p_{\alpha}\setminus r=\{x^{0}_{\alpha},..., x^{m-1}_{\alpha}\}$

Let $\mathcal{B}$ be a countable basis of $E$. For each $\alpha$ we can choose $V_{i}\in \mathcal{B}$ such that for all $i\in m$, $x_{i}^{\alpha}\in V_{i}$ and for all $i\neq j$, $V_{i}\times V_{j}\subseteq W$ (this is possible since $W$ is open and for all $i\neq j$ and all $\gamma$ $(x^{\gamma}_{i}, x^{\gamma}_{j})\in W$).

We will prove by induction on $m$ that there exists uncountable many elements that are compatible.

For $m=1$, let $T=\left\{ x^{0}_{\alpha} : \alpha<\omega_{1}\right\}$. We know that $T$ do not have any uncountable closed-homogeneous set, so we can use \textbf{SOCA($\aleph_{1}$)} to choose an uncountable open-homogeneous set. Let $S\subseteq \omega_{1}$ be the indexes of the elements of this open-homogeneous set. Notice that all the elements of $\{p_{\alpha} \ : \ \alpha\in S\}$ are compatible.

Assume we have the result for $m$, we will prove it for $m+1$.

First, using the induction hypothesis, take $\{q_{\alpha} \ : \ \alpha<\omega_{1}\}$ such that $\{q_{\alpha}\setminus \{x_{\alpha}^{m}\} \ : \ \alpha<\omega_{1}\}$ are compatible. Now, let $T=\{x^{m}_{\alpha} : \alpha<\omega_{1}\}$. We know that $T$ do not have any uncountable closed-homogeneous set, so we can use \textbf{SOCA($\aleph_{1}$)} to choose an uncountable open-homogeneous set. Let $S\subseteq \omega_{1}$ be the indexes of the elements $x_{\alpha}^{m}$ in that uncountable open-homogeneous set. Notice that all the elements of $\{q_{\alpha} \ : \ \alpha \in S\}$ are compatible: first, all the elements of $\{q_{\alpha}\setminus \{x_{\alpha}^{m}\} \ : \ \alpha\in S\}$ are compatible; also, we have that $(x_{\alpha}^{m}, x^{m}_{\beta})\in W$ for all $\alpha, \beta\in S$ and, finally, we have that $(x^{m}_{\alpha}, x^{j}_{\beta})\in V_{m}\times V_{j}\subseteq W$ for all $\alpha, \beta \in \omega_{1}$ and all $j\in m$.
\end{proof}

The assumption that $\kappa<\mathfrak{c}$ cannot be improved. Following Note \ref{entangled note} after Theorem \ref{Luzin entangled}, we can modify Lemma 4.2 of \cite{todorcevic1989partition} to show:

\begin{theorem}\label{2-entangled}
There is a set of size continuum $X$ such that for every collection of size continuum of disjoint $2$-tuples of $X$ there are $(x_{1}, y_{1})$, $(x_{2}, y_{2})$, $(w_{1}, z_{1})$, $(w_{2}, z_{2})$ such that $x_{1}<x_{2}$ and $y_{1}<y_{2}$, $w_{1}<z_{1}$ but $w_{2}>z_{2}$.
\end{theorem}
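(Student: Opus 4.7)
The plan is to essentially repeat the construction of Theorem \ref{Luzin entangled} while dropping the step that makes the set Luzin, and verify that the oscillation argument establishing the entangled property yields the strengthened ``every size-continuum family of disjoint pairs'' conclusion stated here. Note \ref{entangled note} already signals this: the entangled portion is a ZFC phenomenon and \textbf{CH} is only invoked to meet each closed nowhere dense set in countably many points.

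First I would fix enumerations $\{f_\alpha : \alpha<\mathfrak c\}$ of all continuous functions from $G_\delta$ subsets of $\mathds R$ to $\mathds R$ and $\mathds R=\{x_\alpha : \alpha<\mathfrak c\}$. I would then recursively define $\gamma_\alpha=\min\{\beta : x_\beta\notin B_\alpha\}$, where $B_\alpha=\{f_\chi(x_{\gamma_\xi}) : \xi,\chi<\alpha\}$, and set $X=\{x_{\gamma_\xi} : \xi<\mathfrak c\}$. Since $|B_\alpha|\leq |\alpha|<\mathfrak c$ for every $\alpha<\mathfrak c$, the real line is never exhausted and the recursion runs through $\mathfrak c$ stages. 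This is precisely the construction from Theorem \ref{Luzin entangled} with the closed nowhere dense enumeration removed; the Luzin property is sacrificed but nothing else.

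Next, given any collection $\{(x_{\alpha_\xi},x_{\beta_\xi}) : \xi<\mathfrak c\}$ of pairwise disjoint pairs in $X$, I would, as in the proof of Theorem \ref{Luzin entangled}, pass to a subcollection of size $\mathfrak c$ on which $\alpha_\xi<\beta_\xi$ (or else swap roles) and consider $g:K\to\mathds R$ defined by $x_{\alpha_\xi}\mapsto x_{\beta_\xi}$. The crux is to show that $K_0=\{s\in K : |\omega_g(s)|\geq 2\}$ still has size $\mathfrak c$. Otherwise $K\setminus K_0$ has size $\mathfrak c$ and the restriction $g\restriction (K\setminus K_0)$ is continuous; it extends to a continuous function on a $G_\delta$ domain, hence equals some $f_\chi$. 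But then $x_{\gamma_{\beta_\xi}}=f_\chi(x_{\gamma_{\alpha_\xi}})$ would hold for $\mathfrak c$ many indices, which contradicts the defining property of $X$ as soon as $\xi$ is large enough that both $\chi<\xi$ and $\alpha_\xi<\xi$ (at that stage the image was explicitly forbidden from $X$). This is the step where the difference from Theorem \ref{Luzin entangled} matters: the forbidding argument must be shown to operate at continuum scale rather than $\aleph_1$ scale, and it does so precisely because $|B_\alpha|<\mathfrak c$ at every stage.

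Finally, once $|K_0|=\mathfrak c$ is established, the remainder of the proof of Theorem \ref{Luzin entangled} carries over verbatim: one picks for each $s\in K_0$ a rational $r$ separating two distinct points of $\omega_g(s)$, shrinks $K_0$ to a size-continuum subset on which the same $r$ works for all $s$, and for any $t<s$ in this shrunken $K_0$ produces $t_0,t_1\in K\cap I_t$ and $s_0,s_1\in K\cap I_s$ (with $I_t,I_s$ disjoint intervals) satisfying $g(t_0),g(s_0)<r<g(t_1),g(s_1)$; the pairs $(t_0,g(t_0)),(s_1,g(s_1))$ and $(t_1,g(t_1)),(s_0,g(s_0))$ then furnish the required increasing and decreasing witnesses. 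The main obstacle, as indicated above, is the cardinal-arithmetic verification that the ``continuity on a large set'' contradiction survives at scale $\mathfrak c$; every other step is purely a re-reading of Todor\v cevi\'c's oscillation argument with ``uncountable'' replaced by ``of size $\mathfrak c$''.
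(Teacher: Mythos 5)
Your proposal is correct and follows exactly the route the paper intends: Theorem \ref{2-entangled} is given there only as ``modify Lemma 4.2 of Todor\v{c}evi\'{c} following Note \ref{entangled note}'', i.e.\ rerun the construction of Theorem \ref{Luzin entangled} without the nowhere dense sets and check that the oscillation argument (including the ``$g$ continuous on a large set would be some $f_\chi$, contradicting the forbidding of $B_\alpha$'' step) survives at scale $\mathfrak c$, which is precisely what you do. The only blemish is a harmless conflation of indices in the enumeration of $\mathds{R}$ with construction stages (e.g.\ writing $x_{\gamma_{\beta_\xi}}$ for $x_{\beta_\xi}$); the earlier reduction to $K=\{x_{\alpha_\xi}:\alpha_\xi<\beta_\xi\}$ already supplies the needed ordering of stages, so the argument stands.
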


Theorem \ref{2-entangled} shows that the increasing coloring for $X^{2}$ cannot have a size continuum homogeneous set, so \textbf{SOCA($\kappa$)} can only be valid for $\kappa<\mathfrak{c}$.

\medskip

In order to prove \textbf{SOCA}, the autors of \cite{abraham1985consistency} used sequences of elementary submodels and assumed that their spaces only had countable closed-homogeneous sets using a combinatorial principle (\emph{there always exists fast clubs for (special) families of size continuum}). Following these ideas Moore and Todor\v cevi\' c in \cite{moorebaumgartner} introduced the principle \textbf{(**)} which we shall call \textbf{MTA}:

\begin{definition}[Moore, Todor\v cevi\' c \cite{moorebaumgartner}]
The \emph{Moore-Todor\v cevi\' c Axiom} for ($\omega_{2}$,$\aleph_{2}$, $\aleph_{1}$) (\textbf{MTA} or \textbf{MTA($\omega_{2},\aleph_{2}, \aleph_{1}$)}) is the statement:

\smallskip

\emph{If $\mathcal{F}$ is a collection of one-to-one functions from $\omega_{2}$ to $\omega_{2}$ and $|\mathcal{F}|\leq \aleph_{2}$ then there is $g:\omega_{2}\rightarrow \omega_{2}$ which is one-to-one such that
\begin{itemize}
\item for every $f\in \mathcal{F}$, $\{\alpha : f(\alpha)=g(\alpha)\}$ is countable (or of size $<\aleph_{1}$).
\item for every $f\in \mathcal{F}$, there is a countable (or of size $<\aleph_{1}$) set $D\subseteq \omega_{2}$ such that if $\alpha\neq \beta\in \omega_{2}\setminus D$ then $f(g(\alpha))\neq g(\beta)$.
\end{itemize}}

\end{definition}

In the same paper were \textbf{MTA} is introduced, the following equivalence is proved:

\begin{lemma}\label{Equiv MTA}
For each $\beta\in \omega_{2}\setminus \omega_{1}$, fix  a bijection $b_{\beta}:\beta\rightarrow \omega_{1}$. \textbf{MTA} is equivalent to the following statement: whenever $\mathcal{F}$ is a collection of at most $\aleph_{2}$ many one-to-one functions from $\omega_{1}$ to $\omega_{1}$, there is a one-to-one $g:\omega_{2}\rightarrow \omega_{2}$ such that, whenever $\beta$ is closed under $g$ and $f\in \mathcal{F}$, there is a countable $D\subseteq \beta$ such that:

\begin{itemize}
\item if $\xi\in \beta\setminus D$ then \[f(\min(b_{\beta}(\xi), b_{\beta}(g(\xi))))<\max(b_{\beta}(\xi), b_{\beta}(g(\xi))),\]
\item if $\eta\neq \xi\in \beta\setminus D$ then \[f(\min(b_{\beta}(g(\eta)), b_{\beta}(g(\xi))))<\max(b_{\beta}(g(\eta)), b_{\beta}(g(\xi))).\]
\end{itemize}
\end{lemma}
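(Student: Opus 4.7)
The plan is to prove both directions by transferring families of one-to-one functions between $\omega_1$ and $\omega_2$ via the fixed bijections $b_{\beta}$. The key observation is that, for $\beta\in\omega_2\setminus\omega_1$, conjugation by $b_{\beta}$ turns a function $h:\omega_1\to\omega_1$ into $b_{\beta}^{-1}\circ h\circ b_{\beta}:\beta\to\beta$, and dually, the restriction $f\restrict\beta$ of a function $f:\omega_2\to\omega_2$ (when $\beta$ is closed under $f$) pushes forward along $b_{\beta}$ to a function $b_{\beta}\circ (f\restrict\beta)\circ b_{\beta}^{-1}:\omega_1\to\omega_1$.

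For the direction \textbf{MTA} $\Rightarrow$ reformulated statement, given a collection $\mathcal{F}$ of at most $\aleph_{2}$ one-to-one functions $\omega_{1}\to\omega_{1}$, for each $f\in\mathcal{F}$ and each $\beta\in\omega_{2}\setminus\omega_{1}$ I would construct a one-to-one $F_{f,\beta}:\omega_{2}\to\omega_{2}$ with $F_{f,\beta}\restrict\beta=b_{\beta}^{-1}\circ f\circ b_{\beta}$, extended injectively on the complement. The resulting family has size $\aleph_{2}\cdot\aleph_{2}=\aleph_{2}$, so \textbf{MTA} yields $g:\omega_{2}\to\omega_{2}$. The two \textbf{MTA} clauses applied to $F_{f,\beta}$ translate through $b_{\beta}$ to the two reformulated clauses, with the countable ``bad'' sets becoming the countable $D\subseteq\beta$. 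The subtle point is that the reformulated clauses assert strict inequalities rather than mere avoidance of equalities; to recover this one enlarges $\mathcal{F}$ by auxiliary functions encoding each $f$ together with a cofinal collection of shifts, so that \textbf{MTA} applied to the extended family rules out all failures of the strict inequality, not just equalities.

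For the reverse direction, given a collection $\mathcal{F}$ of at most $\aleph_{2}$ one-to-one functions $\omega_{2}\to\omega_{2}$, I would form, for each $f\in\mathcal{F}$ and each $\beta\in\omega_{2}\setminus\omega_{1}$, the conjugated partial function $\hat{f}_{\beta}:=b_{\beta}\circ (f\restrict(\beta\cap f^{-1}\beta))\circ b_{\beta}^{-1}$ and extend it (injectively) to $\hat{f}_{\beta}:\omega_{1}\to\omega_{1}$. The collection $\{\hat{f}_{\beta}\}$ has size at most $\aleph_{2}$, so the reformulated principle produces $g:\omega_{2}\to\omega_{2}$. To verify the original \textbf{MTA}, note that an equality $f(\alpha)=g(\alpha)$ with $\alpha,g(\alpha)\in\beta$ forces $\hat{f}_{\beta}(b_{\beta}(\alpha))=b_{\beta}(g(\alpha))$, which is prohibited (in either order case of $\min/\max$) by the strict inequality outside the local exceptional set $D_{\beta}$. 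Taking a cofinal sequence of $\beta$'s closed under both $f$ and $g$, and arguing that the $D_{\beta}$'s lie below a fixed countable threshold determined by the auxiliary data, the local exceptional sets can be stitched into a single countable global exceptional set, as required by \textbf{MTA}.

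The main obstacle will be two-fold. First, the order-sensitive formulation using $\min$ and $\max$ demands a careful case analysis together with auxiliary shift functions that encode the strict inequalities via avoidance of equalities, so that the $\aleph_2$-sized family handed to the ``other'' principle is rich enough. Second, in the reverse direction one must globalize the countable per-$\beta$ exceptional sets into a single countable subset of $\omega_{2}$; this seems to require a reflection-style argument showing that the $D_{\beta}$'s stabilize once $\beta$ exceeds a fixed countable threshold determined by $f$, which in turn constrains how the extensions off $\beta$ in the forward direction may be chosen.
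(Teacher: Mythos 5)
First, a point of comparison: the paper does not actually prove Lemma \ref{Equiv MTA} --- it is quoted from \cite{moorebaumgartner} --- so there is no in-paper argument to measure yours against. Your overall architecture (conjugating by the $b_{\beta}$ in both directions) is the natural one, but the two places you flag as ``obstacles'' are exactly where the content of the proof lives, and neither of your proposed mechanisms works as stated. In the forward direction, ``a cofinal collection of shifts'' is not a construction. What actually reduces the strict inequality to equality-avoidance is the observation that for a one-to-one $f:\omega_{1}\rightarrow\omega_{1}$ the symmetric ``bad'' relation $R_{f}=\{(a,b): a\neq b,\ f(\min(a,b))\geq \max(a,b)\}$ has all sections countable (for $b>a$ badness means $b\leq f(a)$, a bounded hence countable set of $b$; for $b<a$ it means $f(b)\geq a$ with $b<a$, again countable), and hence $R_{f}$ decomposes into countably many graphs of partial injections $h^{f}_{n}$. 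Feeding the family of all $b_{\beta}^{-1}\circ h^{f}_{n}\circ b_{\beta}$ (extended to total injections of $\omega_{2}$), which still has size $\aleph_{2}$, to \textbf{MTA} then yields both bullets. Without isolating this countable-sections fact, your forward direction does not establish the displayed inequalities.

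In the reverse direction there are two further problems. First, your claim that $f(\alpha)=g(\alpha)$ is ``prohibited in either order case of $\min/\max$'' by $\hat{f}_{\beta}$ alone is false: when $b_{\beta}(g(\alpha))<b_{\beta}(\alpha)$ the first bullet constrains $\hat{f}_{\beta}(b_{\beta}(g(\alpha)))$, about which the equality $\hat{f}_{\beta}(b_{\beta}(\alpha))=b_{\beta}(g(\alpha))$ says nothing; you must also place $(\hat{f}_{\beta})^{-1}$, extended to a total injection of $\omega_{1}$, into the family handed to the reformulated principle. Second, the globalization should not go through any ``stabilization of the $D_{\beta}$ below a countable threshold'': the $\beta$'s range over ordinals below $\omega_{2}$, there is no reason for the $D_{\beta}$ to stabilize, and nothing of the sort is needed. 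The correct argument is a single reflection: if $\{\alpha: f(\alpha)=g(\alpha)\}$ had a subset $A$ of size $\aleph_{1}$, then since $\mathrm{cf}(\omega_{2})=\omega_{2}$ there is one $\beta$ closed under $f$ and $g$ with $A\subseteq\beta$, and every element of $A$ would have to lie in the countable exceptional set attached to $\hat{f}_{\beta}$ and $(\hat{f}_{\beta})^{-1}$, a contradiction; the second \textbf{MTA} clause is handled the same way, using that $\{(\alpha,\alpha'): f(g(\alpha))=g(\alpha')\}$ is a partial injection, so a countable set can meet only countably many of its pairs. With these repairs the equivalence goes through, but as written the proposal has genuine gaps at both of its load-bearing steps.
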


Notice that, using the \textbf{Axiom of Choice}, if the functions are countable-to-one, you can also construct a $g$ with the above mentioned properties. The following lemma was also discovered, independently, by Thomas Gilton \cite{giltonpersonal}.

\begin{lemma}\label{main Lemma continuum 2}
$(2^{\aleph_{0}}=\aleph_{2}+\mbox{ \textbf{MTA}})$ Given a separable metric space $E$ of size $\aleph_{2}$ and an open and symmetric $W\subseteq E^{\dagger}$ such that all closed-homogeneous sets are countable there is an $E_{0}\subseteq E$ of size $\aleph_{2}$ such that the finite open-homogeneous subsets of $E_{0}$ ordered by reverse inclusion form a ccc forcing.
\end{lemma}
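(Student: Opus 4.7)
My plan is to obtain $E_0$ as the image of an injection $g: \omega_2 \to \omega_2$ produced by \textbf{MTA} (in the reformulation of Lemma \ref{Equiv MTA}), applied to a family $\mathcal F$ of $\aleph_2$ one-to-one functions that encodes every potential obstruction to ccc in $\mathds{C}_{E_0, W}$. Concretely, I enumerate $E = \{e_\alpha : \alpha < \omega_2\}$, fix a countable base $\mathcal B$ of $E$, and call a tuple $\bar V = (V_0, \ldots, V_{m-1}) \in \mathcal B^m$ \emph{$W$-admissible} when $V_i \times V_j \subseteq W$ for $i \neq j$. The $\Delta$-system/uniformization argument from Theorem \ref{SOCA from SOCA} converts any hypothetical antichain $\{p_\alpha : \alpha < \omega_1\} \subseteq \mathds{C}_{E_0, W}$ into a sequence of $m$-tuples $(x_0^\alpha, \ldots, x_{m-1}^\alpha)$, all sharing a common root $r$ and all lying in the columns of a fixed $W$-admissible tuple $\bar V$; incompatibility of $p_\alpha$ and $p_\beta$ is then equivalent to $(x_i^\alpha, x_i^\beta) \notin W$ for some coordinate $i < m$. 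The task reduces to ensuring that no such $\omega_1$-sequence of columns lives inside $E_0$.

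The countability-of-closed-homogeneous-sets hypothesis guarantees that within each $V_i \cap E$ no $\aleph_1$-subset is entirely $W$-avoiding, so for each $(\bar V, i)$ and each source index $\gamma < \omega_2$ the would-be $W$-avoiders of $e_\gamma$ inside $V_i$ form only a ``small'' structure. Using $2^{\aleph_0} = \aleph_2$ to bound the number of $(\bar V, i, \gamma)$-triples by $\aleph_2$, I package all these bad-pair structures into a family $\mathcal F$ of at most $\aleph_2$ one-to-one functions on $\omega_2$, then apply \textbf{MTA} to produce $g: \omega_2 \to \omega_2$ avoiding them. Setting $E_0 = \{e_{g(\alpha)} : \alpha < \omega_2\}$, the two clauses of \textbf{MTA}'s conclusion should force that in any $\Delta$-system-reduced potential antichain inside $E_0$ the witness-map $(\alpha, \beta) \mapsto i(\alpha, \beta)$ is defined only on a countable set of pairs, yielding two compatible conditions and hence ccc.

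The main obstacle is precisely this encoding step: \textbf{MTA} avoids coincidences one function at a time, while an $\omega_1$-antichain is governed by quadratic data $i: [\omega_1]^2 \to m$, so one must slice the pair-space into $\omega_2$ linear streams and arrange that avoidance in each stream cumulatively kills the full antichain. The $b_\beta$-bijections supplied by Lemma \ref{Equiv MTA} are what make this slicing feasible, since the first clause there controls $g$ against a single $f$ under the $b_\beta$-identification while the second controls $f \circ g$ against $g$; these two modes should correspond to the ``source'' and ``target'' roles of indices in the pair $(\alpha, \beta)$ inside each coordinate $V_i$.
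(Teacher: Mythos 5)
Your skeleton (get $g$ from \textbf{MTA} via Lemma \ref{Equiv MTA}, set $E_0=\{e_{g(\alpha)}:\alpha<\omega_2\}$, reduce a putative antichain by a $\Delta$-system and a $W$-admissible tuple of basic open sets as in Theorem \ref{SOCA from SOCA}) matches the paper, but the step you yourself flag as ``the main obstacle'' --- what family $\mathcal F$ to feed \textbf{MTA} and why avoiding it kills antichains --- is exactly the missing idea, and your proposed encoding does not work as described. First, the claim that ``the would-be $W$-avoiders of $e_\gamma$ inside $V_i$ form only a small structure'' is unjustified: the hypothesis that all closed-homogeneous sets are countable bounds \emph{pairwise} $W$-free sets, not the set $(W^c)_{e_\gamma}$ of non-$W$-neighbours of a single point, which can perfectly well have size $\aleph_2$. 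So there is no visible way to package these ``bad-pair structures'' into $\aleph_2$ many one-to-one (or even countable-to-one) functions, and you give no mechanism by which avoiding each function separately defeats the quadratic incompatibility data of an antichain.

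The paper resolves this quite differently: \textbf{MTA} is not applied to functions encoding bad pairs of points at all. Using $2^{\aleph_0}=\aleph_2$, for each closed $F\subseteq E^n$ and each $\beta\in\omega_2\setminus\omega_1$ one builds a continuous $\in$-chain $\{M_\gamma^{F,\beta}:\gamma<\omega_1\}$ of countable elementary submodels of $H(\theta)$ with $e_\alpha\in M^{F,\beta}_{b_\beta(\alpha)+1}$, and the family handed to \textbf{MTA} consists of the countable-to-one ``height profile'' functions $f_{F,\beta}$ of these towers. The point of $g$ is then coherence, not avoidance: for all but countably many indices below a $\beta$ closed under $g$, the $b_\beta$-order of the $g$-values agrees with the order of the heights $ht(e_{g(\xi)})$ in the tower attached to $(F,\beta)$, where $F$ is the closure in $E^n$ of the tuples coming from the reduced antichain. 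That coherence is what makes the set $F_\alpha=\{a\in E\cap M_\mu : x_\alpha\restriction_{1,\dots,n-1}\concat a\in F\}$ an element of $M_\mu$ containing a point outside $M_\mu$, hence uncountable; only at that stage does the hypothesis on closed-homogeneous sets enter (an uncountable $F_\alpha$ cannot be $W$-free), and the compatible pair is finally produced by the Abraham--Rudin--Shelah/Kunen club method (preparation with the $V_i$'s, then cloning). None of this elementary-submodel machinery, nor the role of the closed sets $F$, appears in your proposal, so as written it does not constitute a proof.
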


\begin{proof}
The proof for this Lemma is inspired, indeed, by Todor\v{c}evi\'{c} and Moore \cite{MooreOCAaleph2} and by the proof of the consistency of \textbf{SOCA} with $\mathfrak{c}>\aleph_2$ in Abraham-Rudin-Shelah \cite{abraham1985consistency}. The inspiration of the former will be seen on the use of \textbf{MTA}. On the other hand, the strategy that Abraham-Rudin-Shelah used to prove their result was that given a metric space, they created a ``tower of models'' for each closed set of the metric space and its cartesian powers. After that, they ``combined'' all of them using their Axiom \textbf{A1} (which, essentially, guarantees a fast club) and, finally, use the club method.

We will have a similar approach (even using the club method at the end). Nevertheless, we will change \textbf{A1} for \textbf{MTA} and create ``towers of models'' for all the pairs of closed sets and an uncountable ordinal of $\omega_2$. We do this as follows:

Let $E=\{e_{\alpha} : \alpha<\omega_{2}\}$ and $\mathcal{B}=\{B_{n}: n\in \omega\}$ be a basis for its topology. Fix $\theta\geq \aleph_{3}$, and $b_{\beta}$ bijections between $\beta$ and $\omega_{1}$ for each $\beta\in \omega_{2}\setminus \omega_{1}$.

For each closed subset of $E^{n}$, say $F$, and each $\beta\in \omega_{2}\setminus \omega_{1}$ we define $\{M_{\gamma}^{F,\beta}: \gamma<\omega_{1}\}$ a continuous $\in$-chain of countable elementary
submodels of $\langle H(\theta), E, \mathcal{B}\rangle$ such that given $\alpha<\beta$ we ensure that $e_{\alpha}\in M_{b_{\beta}(\alpha)+1}^{F,\beta}$. Whenever $F$ and $\beta$ are clear from the context we will use the notation $M_{\gamma}$ for $\gamma\in \omega_1$.

Notice that, since there are only $\aleph_{2}$ pairs $(F, \beta)$, we just define $\aleph_2$ ``towers of models'' each one of them with $\omega_1$ models such that for all $\gamma\in \omega_1$, $F\in M^{F,\beta}_{\gamma}$ and \[\{e_{\alpha}: \alpha<\beta\}\subseteq \bigcup_{\delta\in \omega_1}M^{F,\beta}_{\delta}.\]

Once again, for $F$ a closed subset of $E^{n}$ and $\beta\in \omega_{2}\setminus \omega_{1}$, we define $f_{F, \beta}:\omega_{1}\rightarrow \omega_{1}$ such that:
\[f_{F,\beta}(\alpha)=\min\left\{\gamma\in\omega_{1} :\forall \xi<\alpha \left( e_{b_{\beta}^{-1}(\xi)}\in M_{\gamma}\right), \forall \delta \geq \gamma\left( e_{b_{\beta}^{-1}(\delta)}\notin M_{\alpha+1}\right)\right\}\]
using the sequence of elementary submodels corresponding to $F$ and $\beta$. These functions are well define, since each $M_{\chi}$ is countable, and each of of them is a countable-to-one function.

The combinatorial principle \textbf{MTA} ensures the existance of a one-to-one function $g:\omega_{2}\rightarrow\omega_{2}$ satisfying the conditions of Lemma \ref{Equiv MTA} for the set $\{f_{F, \beta}: \beta\in\omega_{2}\setminus\omega_{1}, n\in \omega, F\subseteq E^{n}\}$.

Let $E_{0}=\{e_{g(\alpha)} : \alpha<\omega_{2}\}$. We claim that this set works.

To prove this, it is enough to show that given an uncountable collection of finite open-homogeneous sets, the union of two of them is open-homogeneous.

Assume that we have $\aleph_{1}$-many finite open-homogeneous sets, say $A=\{x_{\alpha} : \alpha<\omega_{1}\}$. We know that there is an uncountable $\beta<\omega_{2}$ such that for all $\alpha<\omega_1$, if $e_{\xi}\in x_{\alpha}$, then $\xi<\beta$. Furthermore, we can find this $\beta$ such that it is closed under $g$.

Without lost of generality, we can assume that they form a $\Delta$-system with empty root and they all have size $n$. As in Theorem \ref{SOCA from SOCA}, we can naturally associate a vector in $E^{n}$ to each $x_{\alpha}$. To do so, we enumerate $x_{\alpha}=\{e_{g(\xi^{1}_{\alpha})}, ..., e_{g(\xi^{n}_{\alpha})}\}$ in such a way that $i<j$ if and only if $b_{\beta}(g(\xi^{i}_{\alpha}))<b_{\beta}(g(\xi^{j}_{\beta}))$.

Shrinking $A$ if necessary, we can assume that, if $\delta<\gamma$, then \[\max(b_{\beta}(\xi^{n}_{\delta}),b_{\beta}(g(\xi^{n}_{\delta})))<\min( b_{\beta}(\xi^{1}_{\gamma}), b_{\beta}(g(\xi^{1}_{\gamma})).\]

After these reductions, let $F$ be the closure of $\{(e_{g(\xi^{1}_{\alpha})}, ..., e_{g(\xi^{n}_{\alpha})}): \alpha<\omega_{1}\}$ in $E^{n}$.

We will work with the continuous $\in$-chain of models associated with $F$ and $\beta$.

Given $\alpha<\beta$, we say that the height of $e_{\alpha}$, denoted $ht^{F,\beta}(e_{\alpha})=ht(e_{\alpha})$, is the minimum $\gamma$ such that $e_{\alpha}\in M_{\gamma+1}\setminus M_{\gamma}$. Given our definition for these chains of models, we have that $ht(e_{\gamma})\leq b_{\beta}(\gamma)$ for $\gamma\in \beta$. On the other hand, the definition of $f_{F,\beta}$ ensure that, if $\alpha, \delta \in \omega_1$ and $f_{F,\beta}(\alpha)<\delta$ then $ht(e_{b^{-1}_{\beta}(\xi)})< \delta $  for all $\xi<\alpha$, $ht(e_{b^{-1}_{\beta}(\alpha)})< \delta $ and $\alpha< ht(e_{b_{\beta}^{-1}(\chi)})$ for all $\chi\geq\delta$.

We claim that, for all but countably many ordinals in $\beta$, if $b_{\beta}(g(\eta))<b_{\beta}(g(\xi))$ then $ht(e_{g(\eta)})<ht(e_{g(\xi)})$. To see this, from the second bullet of \textbf{MTA}, we have that there is a countable $D$ such that if $\eta\neq \xi\in \beta\setminus D$ and $b_{\beta}(g(\eta))<b_{\beta}(g(\xi))$ then $f_{F,\beta}(b_{\beta}(g(\eta))<b_{\beta}(g(\xi))$. This means that $b_{\beta}(g(\eta))<ht(e_{g(\xi)})$. Since $ht(e_{g(\eta)})\leq b_{\beta}(g(\eta))$, we have that $ht(e_{g(\eta)})<ht(e_{g(\xi)})$.

The above inequality allow us to find a really useful family of uncountable sets: given $\alpha<\omega_1$ such that $\xi_{\alpha}^{i}\in \beta\setminus D$ for all $i\in \{1,..., n\}$, let $\mu=ht(e_{g(\xi^{n}_{\alpha})})$ and $F_{\alpha}=\{a\in E\cap M_{\mu}: x_{\alpha}\restrict_{1, ..., n-1}\concat a\in F\}$. Since $b_{\beta}(g(\xi^i_{\alpha}))< b(g(\xi^{n}_{\alpha}))$ for $i<n$, we have that $ht(e_{g(\xi^i_{\alpha})})< ht(e_{g(\xi^n_{\alpha})})=\mu$. Then, $e_{g(\xi^i_{\alpha})}\in M_{\mu}$. Also, $F, x_{\alpha}\restrict_{1, ..., n-1}\in M_{\mu}$, which implies that $F_{\alpha}\in M_{\mu}$. This shows that $F_{\alpha}$ is uncountable. To see this, remember that given a countable set $L$, if $L\in M_{\mu}$ then $L\subseteq M_{\mu}$. Therefore, the fact that $F_{\alpha}\in M_{\mu}$, $\xi^n_{\alpha}\in F_{\alpha}$ but $\xi^n_{\alpha}\notin M_{\mu} $ shows that $F_{\alpha}$ is uncountable.

From here, it is enough to follow the exposition of the consistency of \textbf{SOCA($\aleph_1$)} as in \cite{kunen2014set} Lemmas V.6.14 and V.6.15. The technique presented there is the club method used in \cite{abraham1985consistency}. The club method has two steps: the preparation and the cloning. For the preparation, we select open sets for each $\alpha<\omega$ as we exposed here for Theorem \ref{SOCA from SOCA}. 

Cloning resembles the technique in Theorem \ref{Luzin entangled} where we use the oscillation of a discontinuous function, although the contexts are really different. The analogy comes from the fact that both techniques first find points in a set of accumulation points ($F$ and $F_{\alpha}$ here and the oscillation in Theorem \ref{Luzin entangled}) to fix open sets ($I_s$, $I_r$, $(-\infty,r)$ and $(r,\infty)$ in Theorem \ref{Luzin entangled}). Afterwards, both technique uses those open sets to find actual elements of the original uncountable set that are compatible (elements of $\{(e_{g(\xi^{1}_{\alpha})}, ..., e_{g(\xi^{n}_{\alpha})}): \alpha<\omega_{1}\}$ here and elements of $K$ in Theorem \ref{Luzin entangled}).
\end{proof}

We see these results as steps towards the consistency of \textbf{SOCA($\aleph_2$)}.

\begin{conjecture}
\emph{\textbf{SOCA($\aleph_2$)}} is consistent.
\end{conjecture}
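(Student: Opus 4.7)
The natural strategy is to build a forcing extension where \textbf{SOCA($\aleph_2$)} holds, starting from a ground model where $2^{\aleph_0} = \aleph_2$ and \textbf{MTA} hold. Since Theorem \ref{2-entangled} shows that \textbf{SOCA($\mathfrak c$)} can fail outright, we must aim for a final model with $\mathfrak c \geq \aleph_3$. The backbone of the construction is Lemma \ref{main Lemma continuum 2}: for each potential counterexample $(E, W)$ whose closed-homogeneous sets are all countable, it produces a set $E_0 \subseteq E$ of size $\aleph_2$ whose finite open-homogeneous subsets (ordered by reverse inclusion) form a ccc poset, and any sufficiently generic filter through this poset is an open-homogeneous set of size $\aleph_2$.

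First I would set up an iteration of length $\omega_3$ (or $\kappa$ for $\kappa$ suitably large), at each stage $\alpha$ handling one candidate pair $(E_\alpha, W_\alpha)$ chosen by bookkeeping from the intermediate model $V_\alpha$: if $(E_\alpha,W_\alpha)$ has no uncountable closed-homogeneous subset and no open-homogeneous subset of size $\aleph_2$, the $\alpha$-th iterand is the ccc forcing from Lemma \ref{main Lemma continuum 2}; if instead an uncountable closed-homogeneous subset exists, it can be grown to size $\aleph_2$ by an \textbf{MA}-style ccc forcing, as in Theorem \ref{SOCA from SOCA}. A reflection argument, analogous to those in \cite{abraham1985consistency}, should then ensure that any counterexample $(E,W)$ surviving to the final model has already appeared with its ``badness'' witnessed in some $V_\alpha$, so it was in fact addressed.

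The principal obstacle will be the choice of support and the preservation of \textbf{MTA} across the iteration. A plain countable-support iteration of $\aleph_2$-sized ccc forcings of length $\omega_3$ collapses $\aleph_2$, so one must use the mixed side-condition machinery developed in the $\aleph_2$-Baumgartner program of Moore and Todor\v{c}evi\'c \cite{moorebaumgartner} and Gilton \cite{gilton2017side}, whose pure side conditions involve models of two distinct types (internally approachable and elementarily closed). Within that framework one needs a preservation theorem stating that \textbf{MTA} is preserved at every stage, so that Lemma \ref{main Lemma continuum 2} remains applicable in each $V_\alpha$; this will almost certainly be the hardest step, since \textbf{MTA} was introduced precisely to enable $\aleph_2$-preserving, properness-like arguments on the $\aleph_2$-level and it is exactly that technology which must be pushed to cope with open colorings along an iteration.

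An alternative route is to lift the Abraham-Rudin-Shelah template from \cite{abraham1985consistency} directly: their consistency proof of \textbf{SOCA} combined ``towers of models'' over closed sets in $E^n$ with Axiom \textbf{A1} (fast clubs) at the $\aleph_1$-level, and the proof of Lemma \ref{main Lemma continuum 2} already performs the analogous bookkeeping with double-indexed towers over pairs $(F,\beta)$, using \textbf{MTA} as an $\aleph_2$-level substitute for fast clubs. Whichever route is chosen, the missing ingredient is an iteration theorem for $\aleph_2$-preserving, \textbf{MTA}-preserving, ccc-like forcings; pending such a theorem, the conjecture appears to be within reach of current $\aleph_2$-Baumgartner technology, and Lemma \ref{main Lemma continuum 2} already supplies the ``single-step'' half of the argument.
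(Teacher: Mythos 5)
This statement is labelled a \emph{conjecture} in the paper, and the paper does not prove it: the author explicitly presents Theorem \ref{SOCA from SOCA} and Lemma \ref{main Lemma continuum 2} only as ``steps towards the consistency of \textbf{SOCA($\aleph_2$)}.'' Your proposal is therefore not being measured against an existing proof, and it is not itself a proof --- you say so yourself when you write that ``the missing ingredient is an iteration theorem for $\aleph_2$-preserving, \textbf{MTA}-preserving, ccc-like forcings.'' That is precisely the open problem; deferring it leaves the conjecture exactly where the paper leaves it. To the extent your sketch has content, it does track the paper's intended route (use Lemma \ref{main Lemma continuum 2} as the single-step poset, lift the Abraham--Rudin--Shelah template with \textbf{MTA} playing the role of \textbf{A1}, iterate with bookkeeping), so you have correctly identified the program; but identifying the program is not closing it.

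Beyond the acknowledged missing iteration theorem, there is a second gap you pass over too quickly. The dichotomy you iterate on --- ``either all closed-homogeneous sets are countable, or an uncountable closed-homogeneous set exists and can be grown to size $\aleph_2$ by an \textbf{MA}-style ccc forcing'' --- does not cover the cases cleanly. Lemma \ref{main Lemma continuum 2} and Theorem \ref{SOCA from SOCA} apply only when \emph{all} closed-homogeneous sets are countable; if instead there is an uncountable closed-homogeneous set $T$ but no homogeneous set of size $\aleph_2$ of either kind, neither branch applies. For ordinary \textbf{SOCA} an uncountable closed-homogeneous set is already a win, but for \textbf{SOCA($\aleph_2$)} it is not, and the poset of finite $W$-free sets is in general \emph{not} ccc (the increasing coloring on a $2$-entangled set is the standard obstruction), so there is no given \textbf{MA}-style forcing to ``grow'' $T$. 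Some new idea is needed to handle this intermediate case, and it is not supplied by anything in the paper or in your sketch. Finally, note the tension between the hypothesis $2^{\aleph_0}=\aleph_2$ in Lemma \ref{main Lemma continuum 2} and the fact (via Theorem \ref{2-entangled}) that the final model must satisfy $\mathfrak c>\aleph_2$: you would need the intermediate models along the iteration to keep $\mathfrak c=\aleph_2$ and \textbf{MTA} simultaneously while the bookkeeping catches all colorings appearing cofinally, and that preservation claim is exactly the unproved core.
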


Proving the conjecture would culminate the work that Ken and I talked so much about.

\section{Once you see the stars}

Once you see the stars, is impossible to forget them. I was Kenneth Kunen's last student. He accepted to be my advisor even though he was already retired.

Being Ken's student forced me to face freedom and taught me how to find questions on my own. I learned early on that Ken was not going to tell me which path to follow, but he was also not going to let me go astray if I needed him.

In 2020, Ken fell ill but, being as responsible and caring as he was, he attended  the defence of my disertation \cite{ongay2020relations} virtually. According to his family, a couple of days later he was hospitalized. He died on August 14, 2020, the same day as my birthday.

\section{Acknowledgements}

The author wants to kindly thank the editors of this issue, Juris Stepr\={a}ns and the referee for all the suggestions and comments. They helped to improve this article to its current form. I also thank my son and wife for all the support they gave me during my PhD and the hard time that came after it.

\newpage

\bibliographystyle{abbrv}
\bibliography{biblio}

\begin{thebibliography}{10}

\bibitem{abraham1985consistency}
U.~Abraham, M.~Rubin, and S.~Shelah.
\newblock On the consistency of some partition theorems for continuous
  colorings, and the structure of {$\aleph_1$}-dense real order types.
\newblock {\em Ann. Pure Appl. Logic}, 29(2):123--206, 1985.

\bibitem{baumgartner1973alphall}
J.~E. Baumgartner.
\newblock All {$\aleph _{1}$}-dense sets of reals can be isomorphic.
\newblock {\em Fund. Math.}, 79(2):101--106, 1973.

\bibitem{ArietOCA}
J.~A. Corona-Garc\'ia and U.~A. Ramos-Garc\'ia.
\newblock {OCA} in strong choquet spaces.
\newblock unpublished article, 2019.

\bibitem{MR0256350}
V.~V. Filippov.
\newblock Perfectly normal bicompacta.
\newblock {\em Dokl. Akad. Nauk SSSR}, 189:736--739, 1969.

\bibitem{galvin1968partition}
F.~Galvin.
\newblock Partition theorems for the real line.
\newblock {\em Notices Amer. Math. Soc}, 15(660):6, 1968.

\bibitem{giltonpersonal}
T.~Gilton.
\newblock Personal communucation.
\newblock Made on April of 2016 during the 9th Appalachian Set Theory Workshop,
  2015.

\bibitem{gilton2017side}
T.~Gilton and I.~Neeman.
\newblock Side conditions and iteration theorems.
\newblock In {\em Proceeeding of the 9th Appalachian Set Theory Workshop},
  2017.

\bibitem{gilton2019abraham}
T.~Gilton and I.~Neeman.
\newblock Abraham-{R}ubin-{S}helah open colorings and a large continuum.
\newblock {\em arXiv preprint arXiv:1904.10516}, 2019.

\bibitem{hart2011arcs}
J.~E. Hart and K.~Kunen.
\newblock Arcs in the plane.
\newblock {\em Topology and its Applications}, 158(18):2503--2511, 2011.

\bibitem{KunenLocallyCompacta}
K.~Kunen.
\newblock Locally connected hereditarily {L}indel\"{o}f compacta.
\newblock {\em Topology Appl.}, 158(18):2473--2478, 2011.

\bibitem{kunen2014set}
K.~Kunen.
\newblock {\em Set theory}, volume~34 of {\em Studies in Logic (London)}.
\newblock College Publications, London, 2011.

\bibitem{MooreContColor}
J.~T. Moore.
\newblock Continuous colorings associated with certain characteristics of the
  continuum.
\newblock {\em Discrete Math.}, 214(1-3):263--273, 2000.

\bibitem{MooreOCAaleph2}
J.~T. Moore.
\newblock Open colorings, the continuum and the second uncountable cardinal.
\newblock {\em Proc. Amer. Math. Soc.}, 130(9):2753--2759, 2002.

\bibitem{moorebaumgartner}
J.~T. Moore and S.~Todorcevic.
\newblock Baumgartner's isomorphism problem for {$\aleph_2$}-dense suborders of
  {$\mathds{R}$}.
\newblock {\em Arch. Math. Logic}, 56(7-8):1105--1114, 2017.

\bibitem{ongay2020relations}
I.~Ongay-Valverde.
\newblock {\em Relations between Set Theory and Computability Theory: the case
  for localization numbers and sets closed under Turing equivalence}.
\newblock The University of Wisconsin-Madison, 2020.

\bibitem{todorvcevic1997comparing}
S.~Todor{\v{c}}evi{\'c}.
\newblock Comparing the continuum with the first two uncountable cardinals.
\newblock In {\em Logic and Scientific Methods}, pages 145--155. Springer,
  1997.

\bibitem{todorcevic1989partition}
S.~Todor\v{c}evi\'{c}.
\newblock {\em Partition problems in topology}, volume~84 of {\em Contemporary
  Mathematics}.
\newblock American Mathematical Society, Providence, RI, 1989.

\bibitem{todorcevic-velickovic}
S.~Todor\v{c}evi\'{c} and B.~Veli\v{c}kovi\'{c}.
\newblock Martin's axiom and partitions.
\newblock {\em Compositio Math.}, 63(3):391--408, 1987.

\bibitem{BobanApplicationsOCA}
B.~Veli\v{c}kovi\'{c}.
\newblock Applications of the {O}pen {C}oloring {A}xiom.
\newblock In {\em Set theory of the continuum ({B}erkeley, {CA}, 1989)},
  volume~26 of {\em Math. Sci. Res. Inst. Publ.}, pages 137--154. Springer, New
  York, 1992.

\end{thebibliography}

\end{document}